\def\newremark#1{\@ifnextchar[{\@orem{#1}}{\@nrem{#1}}}
\def\@nrem#1#2{%
\@ifnextchar[{\@xnrem{#1}{#2}}{\@ynrem{#1}{#2}}}
\def\@xnrem#1#2[#3]{\expandafter\@ifdefinable\csname #1\endcsname
{\@definecounter{#1}\@addtoreset{#1}{#3}%
\expandafter\xdef\csname the#1\endcsname{\expandafter\noexpand
    \csname the#3\endcsname \@remcountersep \@remcounter{#1}}%
\global\@namedef{#1}{\@rem{#1}{#2}}\global\@namedef{end#1}{\@endremark}}}
\def\@ynrem#1#2{\expandafter\@ifdefinable\csname #1\endcsname
{\@definecounter{#1}%
\expandafter\xdef\csname the#1\endcsname{\@remcounter{#1}}%
\global\@namedef{#1}{\@rem{#1}{#2}}\global\@namedef{end#1}{\@endremark}}}
\def\@orem#1[#2]#3{\expandafter\@ifdefinable\csname #1\endcsname
    {\global\@namedef{the#1}{\@nameuse{the#2}}%
\global\@namedef{#1}{\@rem{#2}{#3}}%
\global\@namedef{end#1}{\@endremark}}}
\def\@rem#1#2{\refstepcounter
      {#1}\@ifnextchar[{\@yrem{#1}{#2}}{\@xrem{#1}{#2}}}
\def\@xrem#1#2{\@beginremark{#2}{\csname the#1\endcsname}\ignorespaces}
\def\@yrem#1#2[#3]{\@opargbeginremark{#2}{\csname
         the#1\endcsname}{#3}\ignorespaces}
\def\@remcounter#1{\noexpand\arabic{#1}}
\def\@remcountersep{.}
\def\@beginremark#1#2{\rm \trivlist \item[\hskip \labelsep{\bf #1\ #2}]}
\def\@opargbeginremark#1#2#3{\rm \trivlist
        \item[\hskip \labelsep{\bf #1\ #2\ (#3)}]}
\def\@endremark{\endtrivlist}
\newcommand{\biindice}[3]%
{

\begin{array}[t]{c}
#1\\
{\scriptstyle #2}\\
{\scriptstyle #3}
\end{array}

}
\def\a{\alpha}
\def\s{\sigma}
\def\l{\lambda}
\newcommand{\R}{\mathbb R}
\numberwithin{equation}{section}
\theoremstyle{definition}
\theoremstyle{plain}
\newtheorem{theorem}{Theorem}[section]
\newtheorem{proposition}{Proposition}[section]
\newtheorem{lemma}{Lemma}[section]
\newtheorem{remark}{Remark}[section]
\title{\vskip-2.5cm
{\textbf{A Maupertuis-type principle in relativistic mechanics and applications}}
\thanks{Work written under the auspices of the Gruppo Nazionale per l'Analisi Matematica, la Probabilit\`{a}
e le loro Applicazioni (GNAMPA) of the Istituto Nazionale di Alta Matematica (INdAM). The third author has been also supported by the Ministry of
Science, Technology and Universities of Spain, under Research Grant PGC2018-097104-B-100, and by contract CT42/18-CT43/18 of Complutense University of Madrid.
}
}
\author{
\sc
Alberto Boscaggin
\\
\small
Universit\`{a} degli Studi di Torino
\\
\small
Dipartimento di Matematica ``Giuseppe Peano"
\\
\small  Via Carlo Alberto 10, 10123 Torino, Italy. 
\\
\small E-mail: {\tt alberto.boscaggin@unito.it}
\medskip
\\
\sc
Walter Dambrosio
\\
\small
Universit\`{a} degli Studi di Torino
\\
\small
Dipartimento di Matematica ``Giuseppe Peano"
\\
\small  Via Carlo Alberto 10, 10123 Torino, Italy. 
\\
\small E-mail: {\tt walter.dambrosio@unito.it}
\medskip
\\
\sc Eduardo Mu\~{n}oz-Hern\'andez
\\
\small
Universidad Complutense de Madrid
\\
\small
Instituto de Matem\'{a}tica Interdisciplinar (IMI)
\\
\small
Departamento de An\'alisis Matem\'atico y Matem\'atica Aplicada
\\
\small  Plaza de las Ciencias 3, 28040 Madrid, Spain
\\
\small E-mail: {\tt eduardmu@ucm.es }
}
\date{}
\numberwithin{equation}{section}
\begin{document}

\maketitle

{

\begin{abstract}
We provide a Maupertuis-type principle for the following system of ODE, of interest in special relativity:
$$
\frac{\rm d}{{\rm d}t}\left(\frac{m\dot{x}}{\sqrt{1-|\dot{x}|^2/c^2}}\right)=\nabla V(x),\qquad x\in\Omega \subset \mathbb{R}^n,
$$
where $m, c > 0$ and $V: \Omega \to \mathbb{R}$ is a function of class $C^1$. As an application, we prove the existence of multiple periodic solutions with prescribed energy for a relativistic $N$-centre type problem in the plane.
\\
\\
{\it 2010 Mathematics Subject Classification:} 37J45, 49S05, 70H40.
\\
\\
{\it Keywords and Phrases}. Maupertuis principle, energy functional, special relativity, periodic solutions, singular potentials.
\end{abstract}

}


\section{Introduction}\label{sec1}

The well-known Maupertuis principle of classical mechanics (see, for instance, \cite[page 247]{Arn89} or \cite[equation (8.89)]{GPS02}) states that solutions of the system of ODEs
\begin{equation}\label{intro_class}
\ddot x = \nabla V(x), \qquad x \in \Omega \subset \mathbb{R}^n,
\end{equation}
lying on a fixed energy level
$$
\frac{1}{2} \vert \dot x(t) \vert^2 - V(x(t)) \equiv h \in \mathbb{R},
$$
correspond, up to a time-reparameterization, to critical points $q$ of the energy functional 
$$
\int_0^1 \vert \dot q(\sigma) \vert^2 (V(q(\sigma)) + h) \,d\sigma, 
$$
that is, to geodesic curves on the Hill's region $\{ q \in \Omega \, : \, V(q) + h > 0\}$, when endowed with the Riemannian metric $g_{ij}(q) = (V(q) + h) \delta_{ij}$ (the so-called Jacobi metrix; here $\delta_{ij}$ is the usual Kronecker delta).

Taking advantage of this variational principle, in the last decades several existence and multiplicity results for solutions with \emph{prescribed energy} of equation \eqref{intro_class} have been proved by the use of the methods of critical point theory. In particular, due to its mechanics relevance, cases when the potential $V$ is singular have been the object of extensive investigations: we refer the reader to the book \cite{ACZ93} for a recap of most of the results obtained until the early nineties, and to \cite{BDT17,Cas17,FT00} and the references therein for some more recent contributions. 

In this paper, we look for a corresponding variational principle for the equation
\begin{equation}\label{intro_rel}
\frac{\rm d}{{\rm d}t}\left(\frac{m\dot{x}}{\sqrt{1-|\dot{x}|^2/c^2}}\right)=\nabla V(x),\qquad x\in\Omega \subset \mathbb{R}^n,
\end{equation}
where $m, c$ are positive constants: here, according to the underlying Lagrange structure (see the beginning of Section \ref{sec2} for more details), preservation of energy means that
\begin{equation}\label{ene-intro}
mc^2 \left(\frac{1}{\sqrt{1-|\dot{x}(t)|^2/c^2}}-1\right)-V(x(t)) \equiv h.
\end{equation}
As well known (see, for instance, \cite[Chapter 7]{GPS02} or \cite[Chapter 33]{Gre04}), equation \eqref{intro_rel} finds its motivation from 
special relativity, providing an approximate model for the motion of a particle $x$ of mass $m$, moving at speed close to that of light, denoted here\footnote{Even if in the mathematical literature it is common to use the normalization assumption $m = c = 1$, as in \cite{BDF21,BDP-pp}we find useful to keep track of both these constants, in order to have a clearer physical interpretation of the results obtained, as well as to 
provide some considerations about the non-relativistic limit $c \to +\infty$.}  by $c$.
Just to mention some few references in this direction, we refer to the papers \cite{AB71,Boy04,MP06}, dealing with equation \eqref{intro_rel} in the case when $V$ is a Kepler or Coulomb potential.

From a genuinely mathematical point of view, equation \eqref{intro_rel} has been widely investigated in the last decades, even in the more general case when the right-hand side depends explicitly on time, and possibly does not have a gradient-type structure. Nowadays, an enormous amount of existence and multiplicity results, obtained with both variational and topological methods, is available for the associated boundary value problems: we refer to \cite{Maw13} for a ten-year-old survey on the topic, and to \cite{ABT20,BDP-pp} for some recent papers. In spite of this extensive bibliography, however, to the best of our knowledge there are no general results dealing with solutions of equation \eqref{intro_rel} having prescribed energy $h \in \mathbb{R}$ and, in fact, the underlying Maupertuis principle seems to be missing in the literature.

Our paper aims at filling this gap and is organized as follows. In Section \ref{sec2} we provide the relativistic Maupertuis-type principle, cf. Proposition \ref{pr-var}. Roughly, it states the following: solutions to equation \eqref{intro_rel} having prescribed energy \eqref{ene-intro} correspond, up to a time-reparameterization, to critical points $q$ of the functional
\begin{equation}\label{intro-relfun}
\mathcal{E}_h(q) = \int_0^1 \vert \dot q(\sigma) \vert^2 (Z_h(q(\sigma)) + 2hm) \,d\sigma, 
\end{equation}
where
\begin{equation}\label{intro-pot}
Z_h(q) = 2mV(q)+\dfrac{1}{c^2}\, (V(q)+h)^2.
\end{equation}
Thus, solutions of \eqref{intro_rel} are again geodesics on the Hill's region $\{ q \in \Omega \, : \, V(q) + h > 0\}$, 
but relativistic effects result into a perturbation of the usual Jacobi metric of classical mechanics: indeed, notice that in the non-relativistic limit $c \to +\infty$ the relativistic metric $g_{ij}(q) = (Z_h(q) + 2hm)\delta_{ij}$ reduces, up to a constant factor, to the usual one. We emphasize that, in the functional \eqref{intro-relfun}, no bounds on the modulus of $\dot q$ are needed. At first glance, this might seem quite surprising, since equation \eqref{intro_rel} implicitly requires $\vert \dot x \vert < c$; however, it is coherent with the fact that the fixed energy problem is of purely geometrical nature: the kinematic property $\vert \dot x \vert < c$, indeed, will be automatically obtained by the time-reparameterization. 

At the end of Section \ref{sec2}, an equivalent formulation of the variational principle is given in Proposition \ref{pr-maup}, using instead of $\mathcal{E}_h$ the so-called Maupertuis functional
\[
\mathcal{M}_h(q) = \int_0^1 \vert \dot q(\sigma) \vert^2\,d\sigma \int_0^1  (Z_h(q(\sigma)) + 2hm) \,d\sigma,
\]
which, as well known, is more manageable when using critical point theory, cf. \cite[Remark 4.2]{ACZ93}.

As an application of the Maupertuis principle, in Section \ref{sec3} we investigate the existence of periodic solutions, with prescribed energy, for the equation
\begin{equation}
\label{eq-ncariche_intro}
\frac{\rm d}{{\rm d}t}\left(\frac{m\dot{x}}{\sqrt{1-|\dot{x}|^2/c^2}}\right)=- \sum_{i=1}^N \frac{\kappa_i (x-\sigma_i)}{\vert x - \sigma_i \vert^{\alpha+2}} + \nabla W(x), \qquad x \in \mathbb{R}^2,
\end{equation}
where $\sigma_1,\ldots,\sigma_N$ are distinct points of $\mathbb{R}^2$, $\kappa_i > 0$, $\alpha > 1$ and $W$ is meant as a perturbation term as $\vert x \vert \to +\infty$. Such an equation can be interpreted as a (perturbed) relativistic $N$-centre problem, providing the motion of a charge $x$ in the electric field generated by $N$-fixed charges $\sigma_1,\ldots,\sigma_N$, cf. \cite{ABT20}.
By minimization arguments for the associated energy functional, and taking advantage of the topology of the punctured plane (see \cite{FT00} for a somehow related result in the non-relativistic setting), we prove the existence of multiple periodic solutions of equation \eqref{eq-ncariche_intro}, with prescribed homotopy class and prescribed energy \eqref{ene-intro}, for any $h > 0$ arbitrarily fixed, cf. Theorem \ref{thmain} for the precise statement. We point out that, for the whole argument, the restriction $\alpha > 1$ is needed: this has to be interpreted as a relativistic version of the usual strong-force condition of classical mechanics, see \cite{Gor75}. Notice however that, while in the classical case one is led to assume $\alpha \geq 2$, here a weaker assumption is enough: this comes from the fact that the potential $Z_h$ given in \eqref{intro-pot} behaves, near the singularity, as the square of the potential $V$.
There is however a subtle point, since the physically relevant case $\alpha = 1$, which would give rise to a potential $Z_h$ of order $2$ near the singularity, cannot be treated within our approach: indeed, as explained in Remark \ref{remstrong}, a further condition for $Z_h$ is needed, requiring eventually $\alpha > 1$. At first sight, Theorem \ref{thmain} thus seems to be in sharp contrast with the results obtained in \cite{BDP-pp} for the fixed period problem: as a matter of fact, however, the difficulty comes from the specific structure of the Maupertuis functional, which, compared with the action functional, requires an independent control of the kinetic and potential parts, cf. \cite[Remark 5.4]{ACZ93}. We stress that this difficulty is not only of technical nature: indeed, as pointed
out in Remark \ref{rem_alfa}, in the case $\alpha = 1$ a different range for the admissible energies is expected, giving rise to a different geometry for the associated functional. Of course, due do its mechanical significance, the case $\alpha = 1$ deserves further investigation in the future.

Finally, in Section \ref{sec4} we use phase-plane analysis (for the model problem \eqref{eq-ncariche_intro} for $N = 1$ and $W \equiv 0$) to investigate the optimality of the condition $h > 0$ appearing in Theorem \ref{thmain}, finally proving that it is sharp when $\alpha \geq 2$.

\section{A variational principle for prescribed energy relativistic problems}\label{sec2}

Let us consider the system of ODEs
\begin{equation}
\label{2.1}
\frac{\rm d}{{\rm d}t}\left(\frac{m\dot{x}}{\sqrt{1-|\dot{x}|^2/c^2}}\right)=\nabla V(x),\qquad x\in\Omega,
\end{equation}
where $m, c > 0$, $\Omega \subset \R^n$ is an open set and $V:\Omega \subset \R^n\to \R$ is a function of class $C^1$.
This equation can be meant as the Euler-Lagrange equation 
\[
\frac{\rm d}{{\rm d}t}\left(\frac{\partial L}{\partial \dot{x}}\right)=\frac{\partial L}{\partial x}
\]
for the Lagrangian 
$$
L(x,\dot{x})=-mc^2\sqrt{1-\dfrac{|\dot{x}|^2}{c^2}}+V(x).
$$ 
We can then pass to an equivalent Hamiltonian formulation defining the dual variable 
\begin{equation}
\label{dual}
p=\frac{m\dot{x}}{\sqrt{1-\dfrac{|\dot{x}|^2}{c^2}}};
\end{equation}
accordingly, by Legendre transformation, the associated Hamiltonian function turns out to be 
\begin{equation}
\label{hamilt}
H(x,p)=mc^2\sqrt{1+\frac{|p|^2}{m^2c^2}}-V(x).
\end{equation}
As well known, the Hamiltonian $H$ is a constant of motion. The \emph{relativistic energy} is nothing but the Hamiltonian $H$ written in terms of $x$ and $\dot x$; using \eqref{dual}, we thus find 
$$
E(x,\dot x) = \frac{mc^2}{\sqrt{1-\dfrac{|\dot{x}|^2}{c^2}}}-V(x)
$$
and, so, for any solution $x$ of \eqref{2.1} there exists $\mathfrak{E} \in \mathbb{R}$ such that
\begin{equation}\label{defenergy}
\frac{mc^2}{\sqrt{1-\dfrac{|\dot{x}(t)|^2}{c^2}}}-V(x(t)) \equiv \mathfrak{E}. 
\end{equation}
Let us notice that \eqref{defenergy} implies that, in the case of potential $V$ such that $V(x)\to 0$ as $|x|\to +\infty$, the particle rest energy is $mc^2$. Thus, it is convenient to write
$$
\mathfrak{E}=h+mc^2,
$$
in such a way that $h$ is the difference from the relativistic energy and particle rest energy, cf. with \eqref{intro_rel} of the Introduction. Incidentally, let us notice that (see Remark \ref{rem-nonrel1} for more details) that the \textit{non-relativistic} limit of $h= \mathfrak{E}-mc^2$ as $c\to +\infty$ is the usual classical mechanical energy.

Let us also observe that solutions $x$ with energy $\mathfrak{E} = h + mc^2$ satisfy
\[
V(x(t))+h>0,\quad \forall \ t,
\]
and so are confined in the
\begin{equation} \label{eq-defomegah}
\Omega_h=\{x\in \Omega:\ V(x)+h >0 \},
\end{equation}
which is then the analogous of the Hill's region in classical mechanics.
\medbreak
The aim of this section is to introduce a Maupertuis-type functional whose critical points are solutions of \eqref{2.1} with prescribed energy: in order to do this, we follow the same ideas of the non-relativistic case, relating the action, length, energy and Maupertuis functionals.

Motivated by the application of Section \ref{sec3}, from now on we focus on the case of periodic boundary conditions: it will be clear, however, that the whole discussion could be performed with very minor changes if other boundary value problems (like, for instance, the two-point one) are taken into account.

\subsection{The heuristic derivation: from action to length}

Let us begin our discussion with an heuristic approach leading to a length functional starting from the action functional, without focusing on precise statements and considerations. 

\smallbreak
First of all, let us fix $h\in \R$ such that 
\begin{equation} \label{eq-dominiononvuoto}
\Omega_h\neq \emptyset,
\end{equation}
where $\Omega_h$ is defined in \eqref{eq-defomegah}. For a given $T>0$, we consider the space $W^{1,\infty}_{T}=\{x\in W^{1,\infty}([0,T])\,:\, x(0)=x(T)\}$ and the set
\[
\Lambda_h=\{x\in W^{1,\infty}_{T}: \ x(t)\in \Omega_h,\ |\dot{x}(t)|\leq c,\ \forall \ t\in [0,T]\}.
\] 
We then define the \emph{action functional} $\mathcal{A}_T$ by
\[
\mathcal{A}_T(x)=\int_0^T \left(-mc^2\sqrt{1-\frac{|\dot{x}(t)|^2}{c^2}}+V(x(t))+h+mc^2\right)\; dt,
\]
for every $x\in \Lambda_h$. As well known, its critical points (in an appropriate sense, cf. \cite{ABT20,BDP-pp}) are $T$-periodic solutions of \eqref{2.1}. Now, for every increasing diffeomorphism $t:[0,1]\rightarrow[0,T]$,
let us define 
\[q(\s)=x(t(\s)),
\] 
for every $\s\in [0,1]$. Hence, we plainly deduce that the action functional reads as
\begin{equation*}
\begin{array}{l}
\displaystyle \int_0^1 t'(\s)\left(-mc^2\sqrt{1-\frac{|q'(\s)|^2}{(t'(\s))^2c^2}}+V(q(\s))+h+mc^2\right)\; d\s\,.
\end{array}
\end{equation*}
For every $\s\in [0,1]$, let
\begin{equation} \label{eq-heur1}
b=V(q(\s))+h+mc^2,\quad z=\dfrac{|q'(\sigma)|^2}{c^2}
\end{equation}
and
\[
f(\zeta)=\zeta \left(-mc^2\sqrt{1-\frac{z}{\zeta^2}}+b\right),
\]
for every $\zeta$ with $\zeta\geq \sqrt{z}$. Let us observe that assumption \eqref{eq-dominiononvuoto} implies that $b>mc^2$; hence, a simple computation shows that $f$ has the unique minimum point
\[
\zeta_0=b\, \sqrt{\frac{z}{b^2-m^2c^4}}=\dfrac{V(q(\s))+h+mc^2}{\sqrt{2mV(q(\s))+\dfrac{1}{c^2}\, (V(q(\s))+h)^2+2hm}}\, \dfrac{|q'(\s)|}{c^2},
\]
with
\begin{equation} \label{eq-heur2}
\begin{array}{ll}
\displaystyle f(\zeta_0)&\displaystyle =b\, \sqrt{\frac{z}{b^2-m^2c^4}} \left(-mc^2\sqrt{1-\frac{z}{\zeta_0^2}}+b\right)=\dfrac{{|q'(\s)|}}{c}\, \sqrt{b^2-m^2c^4}\\
&\\
&\displaystyle = |q'(\sigma)|\sqrt{2mV(q(\s))+\dfrac{1}{c^2}\, (V(q(\s))+h)^2+2hm},
\end{array}
\end{equation}
by \eqref{eq-heur1}. Now, let us define
\begin{equation} \label{def-Z}
Z_h(x)=2mV(x)+\dfrac{1}{c^2}\, (V(x)+h)^2,\quad \forall \ x\in \Omega_h,
\end{equation}
and observe that
$$
Z_h(x) + 2hm > 0, \quad \forall \ x \in \Omega_h.
$$
Accordingly, we can define the \emph{length functional} 
\begin{equation} \label{eq-deflungh}
\mathcal{L}_h(q)=\int_0^1|q'(\sigma)|\sqrt{Z_h(q(\s))+2hm}\,d\s.
\end{equation}
From \eqref{eq-heur2} we conclude that 
\begin{equation} \label{eq-heur3}
\begin{split}
\int_0^1 \left(\min_{t'(\s)}f(t'(\s)) \right)\,d\s&=\mathcal{L}_h(q).
\end{split}
\end{equation}
Relation \eqref{eq-heur3} is reminiscent of the well-known connection between action and length in the non-relativistic case, suggesting that solutions of equation \eqref{2.1} can be obtained as geodesics with respect to a suitable Riemannian metric. 

\subsection{The formal statement: energy and Maupertuis functionals}

In this section we provide formal statements and proofs of the variational principle. 
Since, as well known, the length functional \eqref{eq-deflungh} has the drawback of being invariant under time-reparameterizations and is thus not suited for the calculus of variations and min-max critical point theory, 
we first follow the usual strategy of replacing the length functional with the energy functional: the resulting variational principle is contained in Proposition \ref{pr-var}. As a second step, we give an equivalent formulation of this principle, Proposition \ref{pr-maup}: in this second version, the energy functional is replaced by the so-called Maupertuis functional, an even more convenient choice for the use of critical point theory.

\smallbreak
Let us first fix $h\in \R$ such that $\Omega_h\neq \emptyset$ and consider the set
\[
\Lambda'_h=\{q\in H^1([0,1]): \ q(0)=q(1),\ q(\s)\in \Omega_h,\ \forall \ \s\in [0,1]\}.
\]
The \emph{energy functional} for problem \eqref{2.1} is defined by
\[
\mathcal{E}_h(q)=\int_0^1|q'(\s)|^2 \left(Z_h(q(\s))+2hm\right)\,d\s
\]
for every $q\in \Lambda'_h$.
It is well known (see the proof of Lemma \ref{pr-maup2} for more details) that critical points $q\in \Lambda'$ of $\mathcal{E}_h$ fulfill the geodesic equation
\begin{equation} \label{EL-eqn}
\frac{\rm d}{{\rm d}\s}(q'\, (Z_h(q)+2hm))=\frac{1}{2}|q'|^2\nabla Z_h(q)
\end{equation}
and, up to a reparameterization, are solutions of the second order equation
\[
z''=\nabla Z_h(z)
\]
satisfying the conservation law
\[
\dfrac{1}{2} |z'|^2-Z_h(z)=2hm.
\]
Nevertheless, up to a different reparameterization, they also give rise to solutions of fixed \textit{relativistic} energy of \eqref{2.1}. This is the main result of this section and is the content of the next Proposition.
\begin{proposition}
	\label{pr-var} 
	The following hold true.
	\smallbreak
	\noindent
	\textbf{Part A.} Let $q\in \Lambda'_h$ be a non-constant critical point of $\mathcal{E}_h$ and define 
	\begin{equation} \label{eq-cambvar1}
	t(\s)=\int_0^\s \dfrac{V(q(\xi))+h+mc^2}{\sqrt{Z_h(q(\xi))+2hm}}\, \dfrac{|q'(\xi)|}{c^2}\, d\xi,
	\end{equation}
	for every $\s \in [0,1]$. Let $T=t(1)$ and $\varsigma:[0,T]\to [0,1]$ be the inverse of $t$; define
	\[
	x(t)=q(\varsigma(t)), 
	\]
	for every $t \in [0,T]$. Then:
\begin{enumerate}
	\item $x(t)\in \Omega_h$ and $|\dot{x}(t)|< c$, for every $t\in [0,T]$,
	\item $x$ is a periodic solution of \eqref{2.1},
	\item $x$ satisfies 
	\begin{equation}
	\label{2.8bis}
	\begin{array}{ll}
	&\displaystyle\frac{mc^2}{\sqrt{1-|\dot{x}(t)|^2/c^2}}-V(x(t))=h+mc^2,\quad \forall \ t\in [0,T].
	\end{array}
	\end{equation}
\end{enumerate}

\smallskip
\noindent
\noindent
\textbf{Part B.} Let $x: [0,T] \to \Omega_h$ be a non-constant periodic solution of \eqref{2.1}, satisfying the energy law \eqref{2.8bis}.
Define
$$
\varpi(t) = \frac{c^2}{\sqrt{2\lambda}} \int_0^t \frac{Z_h(x(\xi))+2hm}{V(x(\xi)) + h + mc^2}\,d\xi,
$$
for every $t \in [0,T]$, where $\lambda$ is such that $\varpi(T) = 1$. Let $\tau: [0,1] \to [0,T]$ be the inverse of $\varpi$ and define 
$$
q(\s) = x(\tau(\s)), 
$$
for every $\s \in [0,1]$.
Then $q \in \Lambda_h'$ is a non-constant critical point of $\mathcal{E}_h$.
\end{proposition}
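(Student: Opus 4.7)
The whole proposition rests on two elementary algebraic identities specific to the Jacobi-type potential $Z_h$ in \eqref{def-Z}, which I would isolate at the very beginning:
\begin{equation*}
(V(x)+h+mc^2)^{2}-c^{2}\bigl(Z_h(x)+2hm\bigr)=m^{2}c^{4},
\qquad
\nabla Z_h(x)=\frac{2(V(x)+h+mc^2)}{c^2}\,\nabla V(x).
\end{equation*}
Once these are in hand, Part A is essentially a change-of-variable computation, and Part B is the same computation read backwards.

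\textbf{Part A.} The first step is a conservation law: testing the Euler--Lagrange equation \eqref{EL-eqn} against $q'$ and rearranging shows that the quantity $|q'(\sigma)|^{2}\bigl(Z_h(q(\sigma))+2hm\bigr)$ is constant along $q$. Since $q$ is non-constant and $Z_h+2hm>0$ on $\Omega_h$, this constant $K$ is strictly positive, whence $|q'|>0$ everywhere and the integrand defining $t(\sigma)$ in \eqref{eq-cambvar1} is strictly positive. Therefore $t$ is a $C^1$-diffeomorphism of $[0,1]$ onto $[0,T]$, and $\varsigma=t^{-1}$ is well defined. Next I would differentiate $x=q\circ\varsigma$, obtaining
\begin{equation*}
|\dot x(t)|^{2}=\frac{|q'(\sigma)|^{2}}{t'(\sigma)^{2}}
=c^{2}\,\frac{(V(q)+h+mc^{2})^{2}-m^{2}c^{4}}{(V(q)+h+mc^{2})^{2}},
\end{equation*}
where the first algebraic identity above has been used. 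This immediately yields (1) $|\dot x|<c$, and (3) the energy relation \eqref{2.8bis} in the equivalent form $\sqrt{1-|\dot x|^2/c^2}=mc^2/(V(x)+h+mc^2)$. For (2), I would rewrite the relativistic momentum as
\begin{equation*}
\frac{m\dot x}{\sqrt{1-|\dot x|^2/c^2}}
=\frac{(V(q)+h+mc^2)}{c^{2}}\,\frac{q'}{t'}
=\frac{Z_h(q)+2hm}{\sqrt{K}}\,q',
\end{equation*}
using in the last equality the conservation $|q'|\sqrt{Z_h+2hm}=\sqrt{K}$. Differentiating in $t$, invoking \eqref{EL-eqn} and then the second algebraic identity turns the right-hand side into $\nabla V(x)$. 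Periodicity of $x$ follows from $q(0)=q(1)$ together with the natural boundary condition $q'(0)=q'(1)$, which is automatic for critical points of $\mathcal{E}_h$ in $H^{1}_{\mathrm{per}}$.

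\textbf{Part B.} I would run the same computation backwards. Given a solution $x$ of \eqref{2.1} on a fixed energy level, the energy law \eqref{2.8bis} gives $\sqrt{1-|\dot x|^{2}/c^{2}}=mc^{2}/(V(x)+h+mc^{2})$ and the algebraic identity then yields
\begin{equation*}
|\dot x(t)|^{2}\bigl(V(x(t))+h+mc^{2}\bigr)^{2}=c^{4}\bigl(Z_h(x(t))+2hm\bigr).
\end{equation*}
Differentiating $q=x\circ\tau$ and using the definition of $\varpi$ to compute $\tau'=1/\varpi'$, this identity is exactly what is needed to show that $|q'(\sigma)|^{2}(Z_h(q(\sigma))+2hm)$ is \emph{constant} (equal to $2\lambda$), which is the first signal that $q$ is a geodesic. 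The normalization $\varpi(T)=1$ fixes $\lambda$ and hence makes $q\in\Lambda_h'$. To verify that $q$ satisfies \eqref{EL-eqn}, I would take \eqref{2.1} written as $\frac{d}{dt}\bigl((V(x)+h+mc^{2})\dot x/c^{2}\bigr)=\nabla V(x)$ (using the energy law to rewrite the relativistic mass factor), change variables to $\sigma$, and apply the second algebraic identity together with the just-established conservation law to identify the resulting expression with $\tfrac12|q'|^2\nabla Z_h(q)$.

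\textbf{Anticipated difficulties.} None of the steps is genuinely hard; the substance of the proof lies entirely in guessing the reparameterization \eqref{eq-cambvar1} (and its inverse $\varpi$), which is forced by the requirement that $|\dot x|^{2}$ match the relativistic energy relation. The only place where one must be careful is the regularity issue: ensuring that the weak EL equation for $q\in H^1$ transports to a strong, pointwise solution of \eqref{2.1} on $[0,T]$. This follows by a standard bootstrap once one knows $|q'|$ is bounded away from $0$ and $Z_h\in C^1$, but it is worth mentioning because it is what closes the loop between the variational and classical formulations.
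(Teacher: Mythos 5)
Your proposal is correct and follows essentially the same route as the paper: the conservation law $|q'|^2(Z_h(q)+2hm)\equiv\mathrm{const}$ for critical points of $\mathcal{E}_h$, the explicit reparameterizations, and the identity $\nabla Z_h=\tfrac{2}{c^2}(V+h+mc^2)\nabla V$ are exactly the ingredients used there (your identity $(V+h+mc^2)^2-c^2(Z_h+2hm)=m^2c^4$ is an equivalent packaging of the paper's factorization $Z_h+2hm=\tfrac{1}{c^2}(V+h)(V+h+2mc^2)$). Your explicit remarks on the natural boundary condition $q'(0)=q'(1)$ and on bootstrapping regularity are points the paper passes over silently, but they do not change the argument.
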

\begin{proof} \textbf{Part A.} 
	
	\noindent
1. From the definition of $x$ and the fact that $q\in \Lambda'_h$ we immediately deduce that $x(t)\in \Omega_h$, for every $t\in [0,T]$. Moreover, from \eqref{eq-cambvar1} we deduce that
	\begin{equation} \label{eq-derxq}
	\dot{x}(t)=q'(\varsigma(t))\, \dot{\varsigma}(t)=c^2\, \dfrac{q'(\varsigma(t))}{|q'(\varsigma(t))|}\, \dfrac{\sqrt{Z_h(q(\varsigma(t)))+2hm}}{V(q(\varsigma(t)))+h+mc^2},
	\end{equation}
	for every $t\in [0,T]$. Recalling \eqref{def-Z}, we have
	\[
	\begin{split}
	\dfrac{\sqrt{Z_h(q(\varsigma(t)))+2hm}}{V(q(\varsigma(t)))+h+mc^2}&=\dfrac{\sqrt{2mV(q(\varsigma(t)))+\dfrac{1}{c^2}\, (V(q(\varsigma(t)))+h)^2+2hm}}{V(q(\varsigma(t)))+h+mc^2}\\
	& =\dfrac{1}{c} \dfrac{\sqrt{(V(q(\varsigma(t)))+h)\,  (V(q(\varsigma(t)))+h+2mc^2)}}{V(q(\varsigma(t)))+h+mc^2}<\dfrac{1}{c},
	\end{split}
	\]
	for every $t\in [0,T]$. As a consequence, from \eqref{eq-derxq} we infer
	\[
	|\dot{x}(t)|<c,\quad \forall \ t\in [0,T].
	\]
\noindent
2. Let us now prove that $x$ is a periodic solution of \eqref{2.1}. To this aim, let us first observe that the periodicity of $x$ immediately follows from the periodicity of $q$. 

\noindent
As already observed, if $q\in \Lambda'_h$ is a critical point of $\mathcal{E}_h$, then $q$ is a solution of the geodesic equation \eqref{EL-eqn}. Moreover, a computation shows that $\frac{1}{2}|q'|^2(Z_h(q)+2hm)$ is a first integral of \eqref{EL-eqn}. Then, there exists $\lambda>0$ such that
\begin{equation}
\label{2.7}
\frac{1}{2}|q'(\s)|^2 (Z_h(q(\s))+2hm)=\l,\quad \forall \ \sigma\in[0,1].
\end{equation}
As a consequence, from \eqref{eq-cambvar1} we obtain
\begin{equation}
\label{repar}
\begin{split}
t'(\s)&=\dfrac{\sqrt{2\l}}{c^2}\dfrac{V(q(\s))+h+mc^2}{Z_h(q(\s))+2hm},
\end{split}
\end{equation}
for every $\s \in [0,1]$. On the other hand, from \eqref{def-Z} and \eqref{eq-derxq} we deduce that
\begin{equation}
\label{energy}
\begin{array}{l}
\displaystyle \sqrt{1-\dfrac{|\dot{x}(t)|^2}{c^2}}=\sqrt{1-c^2 \dfrac{Z_h(q(\varsigma(t)))+2hm}{(V(q(\varsigma(t)))+h+mc^2)^2}}
=\frac{mc^2}{V(x(t))+h+mc^2},
\end{array}
\end{equation}
for every $t\in [0,T]$. Hence, we obtain
\begin{equation}
\label{2.11}
\begin{split}
\frac{m\dot{x}(t)}{\sqrt{1-|\dot{x}(t)|^2/c^2}}& =\dfrac{V(q(\varsigma(t)))+h+mc^2}{c^2}\, q'(\varsigma(t))\, \dot{\varsigma}(t)\\
&
=\dfrac{V(q(\varsigma(t)))+h+mc^2}{c^2}\, \dfrac{c^2}{\sqrt{2\l}}\, \dfrac{Z_h(q(\varsigma(t)))+2hm}{V(q(\varsigma(t)))+h+mc^2}\, q'(\varsigma(t))\\
& =
\dfrac{1}{\sqrt{2\l}}\, q'(\varsigma(t))\, (Z_h(q(\varsigma(t)))+2hm), 
\end{split}
\end{equation}
for every $t\in [0,T]$. Hence, from \eqref{EL-eqn}, \eqref{repar} and \eqref{2.11} it follows that
\begin{equation}
\label{2.12}
\begin{split}
\frac{\rm d}{{\rm d}t}\left(\frac{m\dot{x}(t)}{\sqrt{1-|\dot{x}(t)|^2/c^2}}\right)&=\frac{1}{\sqrt{2\l}}\left(\frac{{\rm d}}{{\rm d} \sigma}\left(q'(\s)\, (Z_h(q(\s)+2hm) \right)\right)\vert_{\s=\varsigma(t)}\, \dot{\varsigma}(t)\\&=\dfrac{1}{\sqrt{2\l}}\frac{1}{2}|q'(\varsigma(t))|^2\nabla Z_h(q(\varsigma(t)))\, \dot{\varsigma}(t)\\&=\dfrac{c^2}{4\l}\, \dfrac{Z_h(q(\varsigma(t)))+2hm}{V(q(\varsigma(t)))+h+mc^2}\, |q'(\varsigma(t))|^2\, \nabla Z_h(q(\varsigma(t)),
\end{split}
\end{equation}
for every $t\in [0,T]$. Now, let us observe that \eqref{def-Z} implies that
\[
\nabla Z_h(w)=\dfrac{2}{c^2}\, \left(V(w)+h+mc^2\right)\, \nabla V(w),\quad \forall \ w\in \Omega_h.
\]
Taking into account \eqref{2.7} and this relation, from \eqref{2.12} we deduce that 
\[
\frac{\rm d}{{\rm d}t}\left(\frac{m\dot{x}(t)}{\sqrt{1-|\dot{x}(t)|^2/c^2}}\right)=\frac{c^2\nabla Z_h(q(\varsigma (t)))}{2(V(q(\varsigma (t)))+h+mc^2)}\\
=\nabla V(q(\varsigma (t)))=\nabla V(x(t)),
\]
for every $t\in [0,T]$.

\noindent
3. The energy law \eqref{2.8bis} immediately follows from \eqref{energy}.

\smallskip
\noindent
\noindent
\textbf{Part B.} Let us first observe that the fact that $q\in \Lambda'_h$ plainly follows from the definition of $q$. We now check that $q$ satisfies the geodesic equation \eqref{EL-eqn}: for every $\s \in [0,1]$ we have
\begin{equation} \label{eq-pr55}
q'(\s)=\dot{x}(t(\s))\, t'(\s)=\dot{x}(t(\s))\, \dfrac{\sqrt{2\lambda}}{c^2}\, \dfrac{V(x(t(\s)))+h+mc^2}{Z_h(x(t(\s)))+2hm}
\end{equation}
Moreover, from the energy law \eqref{2.8bis} and the definition of $Z_h$ given in \eqref{def-Z}, we deduce that
\[
|\dot{x}(t)|^2=c^4\, \dfrac{Z_h(x(t))+2hm}{(V(x(t))+h+mc^2)^2},
\]
for every $t\in [0,T]$. As a consequence, we obtain
\begin{equation} \label{eq-pr56}
\dfrac{1}{2}\, |q'(\s)|^2\, (Z_h(q(\s))+2hm)=\lambda, 
\end{equation}
for every $\s \in [0,1]$. On the other hand, recalling again the energy law \eqref{2.8bis}, from \eqref{eq-pr55} we deduce that
\[
q'(\s)=\dot{x}(t(\s))\, t'(\s)=\dfrac{\sqrt{2\lambda}}{c^2}\, \dfrac{1}{Z_h(x(t(\s)))+2hm}\, \dfrac{mc^2 \dot{x}(t(\s))}{\sqrt{1-|\dot{x}(t(\s))|^2/c^2}}
\]
and
\[
q'(\s)\, \left(Z_h(q(\s))+2hm\right) = \sqrt{2\lambda}\, \dfrac{m \dot{x}(t(\s))}{\sqrt{1-|\dot{x}(t(\s))|^2/c^2}},
\]
for every $\s \in [0,1]$. Now, from the fact that $x$ is a solution of \eqref{2.1} we infer that
\[
\begin{array}{ll}
\displaystyle \frac{\rm d}{{\rm d}\s}(q'(\s)\, (Z_h(q(\s))+2hm))&=\displaystyle \sqrt{2\lambda}\, \nabla V(x(t(\s)))\, t'(\s)\\ 
&\\
&\displaystyle =
 \dfrac{2\lambda}{c^2}\, \dfrac{V(q(\s))+h+mc^2}{Z_h(q(\s))+2hm}\, \nabla V(q(\s)),
 \end{array}
\]
for every $\s \in [0,1]$. Recalling \eqref{eq-pr56} and the fact that
\[
\nabla Z_h(x)=\dfrac{2}{c^2}\, (V(x)+h+mc^2)\, \nabla V(x),\quad \forall \ x\in \Omega,
\]
we conclude that $q$ satisfies \eqref{EL-eqn}.
\end{proof}

Let us now introduce the \emph{Maupertuis functional} defined by
\begin{equation}
\label{mau-func}
\mathcal{M}_h(u)=\int_0^1|u'(s)|^2\,ds\, \int_0^1 \left(Z_h(u(s))+2hm\right)\,ds,
\end{equation}
for every $u\in \Lambda'_h$; we point out that this is the functional that will be used in our application of Section \ref{sec3}.
The relation between critical points of $\mathcal{E}_h$ and $\mathcal{M}_h$ is well known: we recall it in the next lemma, whose proof is given for the reader's convenience at the end of this section.

\begin{lemma}\label{pr-maup2}
Let $q \in \Lambda'_h$ be a non-constant critical point of $\mathcal{E}_h$. Then, there exists a change of variable $\chi: [0,1] \to [0,1]$ such that the function
$$
u(s) = q(\chi(s)), \qquad s \in [0,1],
$$
is a non-constant critical point of $\mathcal{M}_h$. Conversely, if $u \in \Lambda'_h$ is a non-constant critical point of $\mathcal{M}_h$, then there exists a change of variable $\varrho: [0,1] \to [0,1]$ such that the function
$$
q(\s) = u(\varrho(\s)), \qquad \s\in [0,1],
$$
is a non-constant critical point of $\mathcal{E}_h$.
\end{lemma}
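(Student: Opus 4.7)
The proof rests on the classical observation that critical points of $\mathcal{E}_h$ and $\mathcal{M}_h$ parametrize (up to reparameterization) the same geodesic curves for the Jacobi-type metric $(Z_h(q)+2hm)\delta_{ij}$ on $\Omega_h$, the two functionals differing only in how the ``speed'' of the parametrization is fixed. I will first identify the Euler-Lagrange equation and the natural conservation law for each functional, and then construct explicit reparameterizations $\chi$ and $\varrho$ intertwining the two critical-point sets.

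Computing the first variation of $\mathcal{M}_h(u) = A(u)\,B(u)$, with $A(u) := \int_0^1|u'|^2\,ds$ and $B(u) := \int_0^1(Z_h(u)+2hm)\,ds$, yields the Euler-Lagrange equation $2B(u)\,u'' = A(u)\,\nabla Z_h(u)$. Taking the inner product with $u'$ and integrating over $[0,1]$ (using periodicity) gives the conservation law $|u'|^2/(Z_h(u)+2hm) \equiv A(u)/B(u)$. By contrast, Part A of Proposition \ref{pr-var} already shows that a non-constant critical point $q$ of $\mathcal{E}_h$ solves the geodesic equation \eqref{EL-eqn} together with the dual conservation $|q'|^2(Z_h(q)+2hm) \equiv 2\lambda$ for some $\lambda>0$.

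For the forward implication I will define $\chi:[0,1]\to [0,1]$ as the unique solution of
$$
\chi'(s) = c_0\,(Z_h(q(\chi(s)))+2hm), \qquad \chi(0)=0,
$$
with $c_0 > 0$ chosen so that $\chi(1)=1$; the ODE is well posed because $Z_h(q)+2hm$ is continuous and strictly positive on the compact image of $q$. Setting $u := q\circ \chi$ and writing $F := Z_h+2hm$, the chain rule combined with the $q$-conservation law gives directly $|u'|^2 = 2\lambda c_0^2\, F(u)$; in particular $A(u) = 2\lambda c_0^2 B(u)$, so the Maupertuis conservation ratio $A(u)/(2B(u)) = \lambda c_0^2$ is automatically the expected one. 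It then remains to compute $u''$ by differentiating $u' = c_0\, F(q(\chi))\,q'(\chi)$ and to use \eqref{EL-eqn} to rewrite $F(q)q''$ in terms of $\nabla F(q)$ and $q'$; this yields $u'' = \lambda c_0^2\,\nabla Z_h(u)$, which is exactly the Euler-Lagrange equation of $\mathcal{M}_h$.

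The converse is symmetric: given $u$ a non-constant critical point of $\mathcal{M}_h$, I will set $\varrho:[0,1]\to[0,1]$ as the solution of the complementary ODE $\varrho'(\s) = c_1/F(u(\varrho(\s)))$ with $c_1 > 0$ fixed by $\varrho(1)=1$, and verify by an analogous chain-rule computation that $q := u\circ \varrho$ satisfies both the $\mathcal{E}_h$-conservation $|q'|^2 F(q) \equiv \text{const}$ and the Euler-Lagrange equation \eqref{EL-eqn}. The computations are essentially bookkeeping; the only mildly delicate point is to check that $\chi$ and $\varrho$ are genuine $C^1$-diffeomorphisms of $[0,1]$, which follows from the positivity of $F$ on $\Omega_h$ together with the fact that $|q'|$ and $|u'|$ never vanish in the non-constant case (an immediate consequence of the respective conservation laws).
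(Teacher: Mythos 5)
Your argument is correct, and the computations check out: with $F:=Z_h+2hm$, the Euler--Lagrange equation $2B(u)u''=A(u)\nabla Z_h(u)$ and the conservation law $|u'|^2/F(u)\equiv A(u)/B(u)$ for $\mathcal{M}_h$ are right, and composing the conservation law $|q'|^2F(q)\equiv 2\lambda$ for $\mathcal{E}_h$ with your reparameterization $\chi'=c_0F(q(\chi))$ does give $u''=\lambda c_0^2\nabla Z_h(u)$ with the correct constant $\lambda c_0^2=A(u)/2B(u)$; the converse with $\varrho'=c_1/F(u(\varrho))$ is verified symmetrically. Your route is, however, genuinely different from the paper's. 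The paper does not pass directly from one functional to the other: it uses as a pivot the autonomous second-order system $z''=\nabla Z_h(z)$ with the conservation law $\tfrac12|z'|^2-Z_h(z)=2hm$ (equations \eqref{eq-secondordine}--\eqref{eq-energiavecchia}), showing that critical points of $\mathcal{M}_h$ correspond to such solutions $z$ via a \emph{linear} time rescaling, while critical points of $\mathcal{E}_h$ correspond to them via the arclength-type change of variable $\varrho(s)=\tfrac{1}{\sqrt2}\int_0^s|q'|/\sqrt{F(q)}$, for which it defers the verification to \cite{Ben83}. The two approaches are equivalent --- composing the paper's two reparameterizations reproduces yours --- but they buy different things: the paper's factorization makes explicit the link with the Newtonian system $z''=\nabla Z_h(z)$ at energy $2hm$, which is exploited elsewhere (e.g.\ to invoke \cite[Theorem 15.1]{ACZ93}), whereas your direct construction is self-contained and avoids the external reference, at the price of a slightly longer chain-rule computation. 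Two small points worth making explicit in a final write-up: the pointwise manipulation of $u''$ and $q''$ requires the standard bootstrap upgrading $H^1$ critical points to $C^2$ via the weak Euler--Lagrange equation, and the well-posedness of your ODEs for $\chi$ and $\varrho$ only uses that $F\circ q$ and $F\circ u$ are continuous and bounded between positive constants on $[0,1]$ (the non-vanishing of $|q'|$ and $|u'|$ is not needed there, only to note that the reparameterized curves remain non-constant, which is automatic).
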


From Proposition \ref{pr-var} and Lemma \ref{pr-maup2} we can thus  obtain the following equivalent formulation of the variational principle, relating critical points of \eqref{mau-func} and solutions with prescribed relativistic energy of \eqref{2.1}.

\begin{proposition}
	\label{pr-maup}
	Let $u\in \Lambda'_h$ be a non-constant critical point of $\mathcal{M}_h$. Then, there exist $T>0$ and a change of variable $\upsilon:[0,T]\to [0,1]$ such that the function $x:[0,T]\to \R^n$ defined by
	\[
	x(t)=u(\upsilon(t)), \qquad t \in [0,T],
	\]
	is a non-constant periodic solution of \eqref{2.1}, satisfying the energy law \eqref{2.8bis}.
	Conversely, let $x: [0,T] \to \Omega_h$ be a non-constant periodic solution of \eqref{2.1}, satisfying the energy law \eqref{2.8bis}.
	Then, there exists a change of variable $\varphi:[0,1]\to [0,T]$ such that the function $u:[0,1]\to \R^n$ defined by
	\[
	u(s)=x(\varphi(s)), \qquad s \in [0,1],
	\]
	is a non-constant critical point of $\mathcal{M}_h$ in $\Lambda'_h$.
\end{proposition}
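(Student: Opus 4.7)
The plan is to derive Proposition \ref{pr-maup} as a direct corollary of Proposition \ref{pr-var} combined with Lemma \ref{pr-maup2}, by composing the two reparameterizations in each direction.

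For the forward implication, I would start from a non-constant critical point $u \in \Lambda'_h$ of $\mathcal{M}_h$ and apply Lemma \ref{pr-maup2} to produce a change of variable $\varrho : [0,1] \to [0,1]$ such that $q(\s) := u(\varrho(\s))$ is a non-constant critical point of $\mathcal{E}_h$ in $\Lambda'_h$. Then I would invoke Part A of Proposition \ref{pr-var}, which provides $T>0$ and the reparameterization $\varsigma : [0,T] \to [0,1]$ (inverse of $t(\s)$ in \eqref{eq-cambvar1}) such that $x(t) := q(\varsigma(t))$ is a periodic solution of \eqref{2.1} satisfying the energy law \eqref{2.8bis}. The composition $\upsilon := \varrho \circ \varsigma : [0,T] \to [0,1]$ is then the desired change of variable, since $x(t) = u(\upsilon(t))$.

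For the converse, I would start from a non-constant periodic solution $x$ of \eqref{2.1} on $[0,T]$ satisfying \eqref{2.8bis}, and first apply Part B of Proposition \ref{pr-var} to obtain the change of variable $\tau : [0,1] \to [0,T]$ (inverse of $\varpi$) such that $q(\s) := x(\tau(\s))$ is a non-constant critical point of $\mathcal{E}_h$. Then the second half of Lemma \ref{pr-maup2} provides a reparameterization $\chi : [0,1] \to [0,1]$ with the property that $u(s) := q(\chi(s))$ is a non-constant critical point of $\mathcal{M}_h$, so that $\varphi := \tau \circ \chi : [0,1] \to [0,T]$ fulfills $u(s) = x(\varphi(s))$ and gives the claim.

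No serious obstacle is expected since both ingredients are already established; the only points to verify are that each composition of the two changes of variable is a valid (increasing, bijective, smooth) reparameterization of the correct interval, and that the non-constant character of the curves is preserved at every step (which is clear, as each map in the chain is a diffeomorphism). Thus the proof amounts to displaying the two compositions $\upsilon = \varrho\circ \varsigma$ and $\varphi = \tau\circ \chi$ and invoking the quoted statements.
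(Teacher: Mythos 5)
Your proposal is correct and is exactly the paper's argument: the paper states Proposition \ref{pr-maup} as an immediate consequence of Proposition \ref{pr-var} and Lemma \ref{pr-maup2}, obtained precisely by composing the reparameterizations $\upsilon=\varrho\circ\varsigma$ and $\varphi=\tau\circ\chi$ as you do. (Only a cosmetic slip: the passage from a critical point of $\mathcal{E}_h$ to one of $\mathcal{M}_h$ is the \emph{first} half of Lemma \ref{pr-maup2}, not the second.)
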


\begin{proof}[Proof of Lemma \ref{pr-maup2}] In order to prove the result, it is sufficient to show that non-constant critical points of the two functionals $\mathcal{E}_h$ and $\mathcal{M}_h$ correspond, up to suitable reparameterizations, to solutions $z$ of the differential equation 
	\begin{equation} \label{eq-secondordine}
	z''=\nabla Z_h(z)
	\end{equation}
satisfying
\begin{equation} \label{eq-energiavecchia}
\dfrac{1}{2}\, |z'|^2 -Z_h(z)=2hm.
\end{equation}
In the case of the Maupertuis functional, this follows from a well-known elementary computation: indeed, non-constant critical points $u\in \Lambda'_h$ of $\mathcal{M}_h$ and solutions $z$ of \eqref{eq-secondordine}-\eqref{eq-energiavecchia} are related by 
\[
z(\rho)=u\left(\dfrac{\rho}{\omega}\right),\quad \forall \ \rho \in [0,\omega],  
\]
where $\omega>0$ is given by
\[
\displaystyle \omega=\left(\dfrac{\bigintss_0^1 |u'(\s)|^2\,d\s}{2\bigintss_0^1 \left(Z_h(u(\s))+2hm\right)\,d\s}\right)^{\frac{1}{2}}.
\]
The relation bewteen non-constant critical points $q\in \Lambda'_h$ of $\mathcal{E}_h$ and solutions $z$ of \eqref{eq-secondordine}-\eqref{eq-energiavecchia} is less obvious. We write here the expressions of the change of variables relating $q$ and $z$ and we refer to \cite{Ben83} for the details of the proof.

\noindent
Given $q\in \Lambda'_h$, define
\[
\varrho(s)=\dfrac{1}{\sqrt{2}}\, \int_0^s \dfrac{|q'(\xi)|}{\sqrt{Z_h(q(\xi))+2hm}},\quad \forall \ s\in [0,1],
\]
and let $\varsigma$ be the inverse of $\varrho$. Then, the function $z:[0,\varrho(1)]\to \R^n$ defined by
\[
z(\rho)=q(\varsigma(\rho)),\quad \forall \ \rho \in [0,\varrho(1)],
\]
satisfies \eqref{eq-secondordine} and \eqref{eq-energiavecchia}.

\noindent
Conversely, for every function $z$ satisfying \eqref{eq-secondordine} and \eqref{eq-energiavecchia} in some interval $[0,\textrm{T}]$, let us define
\[
s (t )=\dfrac{1}{\lambda}\, \int_0^t (Z_h(z(\xi))+2hm)\, d\xi,\quad \forall \ t \in [0,\textrm{T}],
\]
where $\lambda >0$ is taken in such a way that $s(\textrm{T})=1$, and let $\tau$ be the inverse of $s$. Then, the function $q:[0,1]\to \R^n$ defined by
\[
q(\sigma)=z(\tau(s)),\quad \forall \ \sigma \in [0,1],
\]
is a non-constant critical point of $\mathcal{E}_h$.
\end{proof}

\section{Prescribed energy periodic solutions for a relativistic problem with singular potential}\label{sec3}

In this section, we present an application of the Maupertuis principle developed in Section \ref{sec2} to the existence of fixed energy periodic solutions for a relativistic problem with singular potential (for the case of smooth potentials, see Remark \ref{remsmooth}).

More precisely, we deal with the equation
\begin{equation}
\label{eq-ncariche}
\frac{\rm d}{{\rm d}t}\left(\frac{m\dot{x}}{\sqrt{1-|\dot{x}|^2/c^2}}\right)=- \sum_{i=1}^N \frac{\kappa_i (x-\sigma_i)}{\vert x - \sigma_i \vert^{\alpha+2}} + \nabla W(x), \qquad x \in \mathbb{R}^2 \setminus \{\sigma_1,\ldots,\sigma_N\},
\end{equation}
where, as usual, $m,c > 0$, $\sigma_1,\ldots,\sigma_N$ are distinct points of $\mathbb{R}^2$, $\kappa_i > 0$ for $i=1,\ldots,N$, and $W: \mathbb{R}^2 \to \mathbb{R}$ is a function of class $C^1$ satisfying the following assumptions: 
\begin{itemize}
\item[(W1)] there exists $M > 0$ such that $0 < W(x) \leq M$ for every $x \in \mathbb{R}^2$
\item[(W2)] $\nabla W(x) \to 0$ for $\vert x \vert \to +\infty$.
\end{itemize}
Finally, we assume the ``relativistic strong force condition'' 
\[
\alpha > 1,
\]
see Remark \ref{remstrong} for some comments. 
Of course, equation \eqref{eq-ncariche} is of the type \eqref{2.1} with 
\begin{equation}\label{potential}
V(x) = \sum_{i=1}^N \frac{\kappa_i}{\alpha \vert x - \sigma_i \vert^\alpha } + W(x)
\end{equation}
and $\Omega = \mathbb{R}^2 \setminus \{ \sigma_1,\ldots,\sigma_N\}$, so that the theory developed in Section \ref{sec2} can be applied.

In the above setting, we are going to prove the existence of (non-collision) periodic solutions of equation \eqref{eq-ncariche} with prescribed energy (in the sense of \eqref{defenergy}) and prescribed homotopy class in the punctured plane $\mathbb{R}^2 \setminus \{ \sigma_1,\ldots,\sigma_N\}$ (see Remark \ref{remhomotopy}). More precisely, we establish the following result.

\begin{theorem}\label{thmain}
In the above setting, let us fix $h > 0$ and a non-trivial homotopy class $[\gamma] \in \pi_1 (\mathbb{R}^2 \setminus  \{ \sigma_1,\ldots,\sigma_N\})$. Then, there exists a non-collision periodic solution of equation \eqref{eq-ncariche}, having energy $\mathfrak{E} = h + mc^2$ and belonging to the homotopy class $[\gamma]$.
\end{theorem}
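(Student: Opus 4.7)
The plan is to invoke the Maupertuis principle of Proposition~\ref{pr-maup}: since $h>0$, $W>0$ and the Coulombic terms in $V$ are positive, $\Omega_h = \Omega = \mathbb{R}^2 \setminus \{\sigma_1,\ldots,\sigma_N\}$ and $Z_h(x)+2hm \geq h^2/c^2+2hm>0$ for every $x \in \Omega$, so it suffices to produce a non-constant critical point of $\mathcal{M}_h$ in
\[
\Lambda'_{h,[\gamma]} := \{u \in \Lambda'_h \,:\, [u]=[\gamma] \text{ in } \pi_1(\mathbb{R}^2 \setminus \{\sigma_1,\ldots,\sigma_N\})\}.
\]
Such a critical point will be produced by direct minimization of $\mathcal{M}_h$ over $\Lambda'_{h,[\gamma]}$.

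Let $\{u_n\} \subset \Lambda'_{h,[\gamma]}$ be a minimizing sequence. The pointwise bound $Z_h+2hm \geq h^2/c^2+2hm>0$ turns $\mathcal{M}_h(u_n)\leq C$ into the upper bound $\|u_n'\|_{L^2}^2 \leq C'$, hence a uniform bound on the Euclidean lengths $L_n = \int_0^1 |u_n'|\,d\s$. Since $[\gamma]$ is non-trivial, the winding number $k_i$ of $u_n$ around some $\sigma_i$ is non-zero; as any closed curve staying outside $\overline{B(\sigma_i,r)}$ with winding $k_i$ around $\sigma_i$ has length at least $2\pi|k_i|r$, the trajectory $u_n$ necessarily intersects $\overline{B(\sigma_i, L_n/(2\pi|k_i|))}$. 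Combined with the elementary bound $\operatorname{diam}(u_n([0,1])) \leq L_n/2$, this produces a uniform $L^\infty$ bound on $\{u_n\}$. Therefore $\{u_n\}$ is bounded in $H^1$, and up to a subsequence $u_n\rightharpoonup u^*$ weakly in $H^1$ and uniformly on $[0,1]$.

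The crucial and most delicate step is to rule out any pathology of the weak limit, and this is where the relativistic strong-force hypothesis $\alpha>1$ plays its role. On one side, combining the winding-number inequality
\[
2\pi |k_i| \leq \int_0^1 \frac{|u_n'|}{|u_n - \sigma_i|}\, d\s
\]
with H\"older's inequality (conjugate exponents $\frac{2\alpha}{2\alpha-1}$ and $2\alpha$) and the local lower bound $Z_h(x) \geq c_0\, |x-\sigma_i|^{-2\alpha}$ yields an estimate of the form $\mathcal{M}_h(u_n) \geq C_1 \|u_n'\|_{L^2}^{2-2\alpha}$; since $2-2\alpha<0$, this in turn gives a \emph{lower} bound $\|u_n'\|_{L^2}\geq c_2>0$, preventing $\{u_n\}$ from degenerating to a constant. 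On the other side, a Gordon-type argument based on the H\"older estimate $|u^*(\s)-u^*(\s_0)|\leq \|(u^*)'\|_{L^2}\,|\s-\s_0|^{1/2}$ shows that any $u^*\in H^1$ reaching some $\sigma_i$ at $\s_0$ must satisfy $\int_0^1 Z_h(u^*)\,d\s = +\infty$; via Fatou's lemma and the just-established lower bound on $\|u_n'\|_{L^2}$, this would force $\mathcal{M}_h(u_n) \to +\infty$, contradicting the minimality. Hence $u^*$ stays away from all singularities.

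Once collisions are excluded, $u^*([0,1])$ is a compact subset of $\Omega$; uniform convergence of $u_n$ to $u^*$ ensures $[u^*]=[u_n]=[\gamma]$ for $n$ large, while continuity of $Z_h$ on the compact range of $u^*$, together with the weak lower semicontinuity of $\|\cdot\|_{L^2}^2$, yields $\mathcal{M}_h(u^*)\leq \liminf \mathcal{M}_h(u_n)$. Thus $u^*\in \Lambda'_{h,[\gamma]}$ is a minimizer, non-constant because $[\gamma]$ is non-trivial, and Proposition~\ref{pr-maup} converts it into the desired non-collision periodic solution of \eqref{eq-ncariche}, with relativistic energy $\mathfrak{E}=h+mc^2$ and in the prescribed homotopy class $[\gamma]$.
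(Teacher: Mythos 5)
Your overall architecture is sound, but one step does not hold up: the claim that, since $[\gamma]$ is non-trivial, the winding number $k_i$ of $u_n$ around some $\sigma_i$ is non-zero. This is true for $N=1$, where $\pi_1$ is detected by the winding number, but fails for $N\geq 2$: the vector of winding numbers is the image of $[\gamma]$ in the abelianization $\mathbb{Z}^N$ of the free group $\pi_1(\mathbb{R}^2\setminus\{\sigma_1,\ldots,\sigma_N\})$, and a commutator such as $a_1a_2a_1^{-1}a_2^{-1}$ is non-trivial with all $k_i=0$. The claim is load-bearing in two places: the $L^\infty$ bound on the minimizing sequence, and, more seriously, the coercivity estimate $\mathcal{M}_h(u_n)\geq C_1\Vert u_n'\Vert_{L^2}^{2-2\alpha}$, which is what gives $\Vert u_n'\Vert_{L^2}\geq c_2>0$ and feeds into the Fatou/Gordon exclusion of collisions. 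Both can be repaired without winding numbers: for the $L^\infty$ bound, argue as in Lemma \ref{poincare} (a non-null-homotopic loop cannot be contained in a ball avoiding all centres, whence $\Vert u\Vert_{L^\infty}\leq R+\Vert u'\Vert_{L^2}$); for the lower bound on $\Vert u_n'\Vert_{L^2}$, note that if $\Vert u_n'\Vert_{L^2}\to 0$ then $u_n$ is eventually contained in a small disk meeting at most one centre, so $[u_n]$ is either trivial or conjugate to a power of a single generator $a_{i^*}$ --- in which case the winding number around $\sigma_{i^*}$ is non-zero and your H\"older estimate does apply. As written, however, your proof only covers classes with some $k_i\neq 0$; in particular it is complete for $N=1$ but not for the theorem as stated.

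Modulo this repair, your route is genuinely different from the paper's and, where it applies, leaner. You minimize directly over the open set $\Lambda'_{h,[\gamma]}$, and the quantitative estimate $\mathcal{M}_h\geq C\Vert u'\Vert_{L^2}^{2-2\alpha}$ (this is where you use $\alpha>1$) prevents degeneration of the minimizing sequence. The paper instead minimizes over the constrained set $\Lambda''_{h,\gamma,\varepsilon}$ with $\Vert u-\sigma_i\Vert\geq\varepsilon$, which trivially prevents degeneration but introduces an artificial boundary; it must then show, via a radial deformation based on property \eqref{p-Z2} (the paper's only use of $\alpha>1$) and Ekeland's variational principle, that the minimum is attained in the interior and is therefore a genuine critical point. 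Your approach bypasses \eqref{p-Z2}, the deformation and Ekeland entirely. One small point to make explicit at the end: $\Lambda'_{h,[\gamma]}$ is open in the periodic $H^1$ space (homotopy classes are open in the $L^\infty$ topology and $H^1\hookrightarrow L^\infty$), so the minimizer $u^*$ is an interior local minimum and hence a critical point of $\mathcal{M}_h$, which is what Proposition \ref{pr-maup} requires.
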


\begin{remark}\label{remhomotopy}
\rm
Since we deal with solutions with prescribed energy (and unprescribed period), the expression ``belonging to the homotopy class $[\gamma]$'' used in the statement of Theorem \ref{thmain} requires some clarification. In fact, the periodic solution $x$ provided by Theorem \ref{thmain} arises as a reparameterization of a critical point $u: [0,1] \to \mathbb{R}^2 \setminus \{ \sigma_1,\ldots,\sigma_N\}$ of the Maupertuis functional. The condition that we impose is that the path $u$ belongs to the homotopy class 
$[\gamma]$; in terms of the solution $x$, this just means that, on some period $T > 0$ (unknown a-priori, and not necessarily the minimal one ), the path $x|_{[0,T]}$ belong (after a linear reparameterization of time) to the class $[\gamma]$. As a consequence, it is not possible to ensure that all the solutions provided by Theorem \ref{thmain}, in spite of coming from different homotopy classes, are distinct the one from the other.

To understand this delicate issue, it can be useful to first focus on the case $N=1$. In this situation, as well known, the fundamental group $\pi_1 (\mathbb{R}^2 \setminus \{\sigma_1\})$ is isomorphic to $\mathbb{Z}$ and the homotopy class of a closed path is completely determined by its winding number around the point $\sigma_1$. Thus, in principle we could apply Theorem \ref{thmain} to get the existence of a periodic solution $x_k$ with winding number $k$, for any $k \in \mathbb{Z}$, with $k \neq 0$.
If $k = \pm 1$, of course we can be sure that the winding number is evaluated on the minimal period, say $T_k$, of the solution $x_k$. 
On the other hand, a solution $x_k$ having winding number $k$ with $\vert k \vert > 1$ could, in principle, 
just correspond to the solution $x_{\textnormal{sgn}(k)}$, when regarded on the period $\vert k \vert T_k$.
Thus, summing up, in the case $N = 1$ Theorem \ref{thmain} just ensures the existence of two solutions, a first one winding around the origin in the clockwise sense and the second one in the counter-clockwise sense. Since, moreover, the problem is time-reversible, these two solutions could correspond to the same orbit, just covered in different directions: summing, just \emph{one} periodic solution can be provided by Theorem \ref{thmain} in this case.

In the case $N \geq 2$, the situation is similar, but more involved. Indeed, now the fundamental group $\pi_1 (\mathbb{R}^2 \setminus \{\sigma_1, \ldots, \sigma_N\})$ is homeomorphic to the free group with $n$ generators $\{a_1,\ldots,a_n\}$, that is, an homotopy class is described by a finite word formed with the symbols $a_i$ and their inverses $a_i^{-1}$ (cf. \cite{Cas17,Hat02}). Taking into account that the group product corresponds to the concatenation of paths, we deduce that homotopy classes described by words which are formed by a repetition of a same sub-word (like, for instance, $a_1 a_2^{-1} a_1 a_2^{-1}$, which consists of two repetitions of the sub-word $a_1 a_2^{-1}$) must not be considered, since they could give rise to the same periodic solutions provided by the homotopy class described by the sub-word.
Moreover, again by the time-reversibility of the problem, a word and its inverse must be identified, since they could provide the same periodic solution, up to time-inversion. Clearly enough, however, still an infinite number of distinct periodic solutions (with the same energy) can be provided by Theorem \ref{thmain}  for any $N \geq 2$.
\end{remark}

We now give the proof of Theorem \ref{thmain}. 
Before going into the details, however, let us state and prove the following preliminary lemma.

\begin{lemma}\label{poincare}
Let $u \in H^1([0,1])$ be such that $u(0) = u(1)$ and $u(s) \neq \sigma_i$ for every $s \in [0,1]$ and $i=1,\ldots,N$.
If, moreover, $u$ is not null-homotopic as a loop in $\mathbb{R}^2 \setminus \{\sigma_1,\ldots,\sigma_N\}$, then
\begin{equation}\label{poinc}
\Vert u \Vert_{L^\infty} \leq R + \Vert u' \Vert_{L^2},
\end{equation}
where $R = \max_i \vert \sigma_i \vert$.
\end{lemma}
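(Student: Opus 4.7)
The plan is to exploit the non-null-homotopy in order to force some point of the loop to lie close to one of the singularities, and to combine this with a periodic Poincaré-type bound on the oscillation of $u$. The crucial input is a lower bound on the length of a loop that winds nontrivially around a given point, expressed in polar coordinates.

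First I would observe that, since $u$ is non-null-homotopic in $\mathbb{R}^2 \setminus \{\sigma_1,\ldots,\sigma_N\}$, the winding number of $u$ around at least one of the singularities is nonzero; relabeling if necessary, suppose this occurs for $\sigma_1$. Because $u(s) \neq \sigma_1$ for every $s \in [0,1]$, by continuity $r(s) := |u(s) - \sigma_1|$ is bounded below by some $r_{\min} > 0$, so we can write the polar decomposition
$$
u(s) - \sigma_1 = r(s)\bigl(\cos\theta(s),\sin\theta(s)\bigr),
$$
with $r, \theta \in H^1([0,1])$ and $\theta(1) - \theta(0) = 2\pi k$ for some nonzero $k \in \mathbb{Z}$ (the $H^1$ regularity of the lift being standard, since $r$ is bounded away from $0$).

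The key computation is the identity $|u'(s)|^2 = (r'(s))^2 + r(s)^2\, (\theta'(s))^2 \geq r(s)^2\,(\theta'(s))^2$. Combining this with Cauchy-Schwarz on $\theta'$ yields
$$
\|u'\|_{L^2}^2 \geq r_{\min}^2 \int_0^1 (\theta'(s))^2\, ds \geq r_{\min}^2 \,(\theta(1)-\theta(0))^2 \geq 4\pi^2\, r_{\min}^2,
$$
so $r_{\min} \leq \|u'\|_{L^2}/(2\pi)$. Choosing $s^\star \in [0,1]$ where $r$ attains its minimum, the triangle inequality gives $|u(s^\star)| \leq R + \|u'\|_{L^2}/(2\pi)$.

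Finally, for any $s \in [0,1]$, the periodicity of $u$ allows to connect $s^\star$ and $s$ via the shorter of the two arcs: since the two contributions sum to $\|u'\|_{L^1}$, the shorter one satisfies
$$
|u(s) - u(s^\star)| \leq \tfrac{1}{2}\|u'\|_{L^1} \leq \tfrac{1}{2}\|u'\|_{L^2}.
$$
Putting the two bounds together,
$$
\|u\|_\infty \leq R + \Bigl(\tfrac{1}{2\pi} + \tfrac{1}{2}\Bigr)\|u'\|_{L^2} \leq R + \|u'\|_{L^2},
$$
since $\frac{1}{2\pi} + \frac{1}{2} < 1$. The main technical point, and essentially the only one requiring care, is the construction of the polar lift $\theta$ and its $H^1$ regularity, which both follow from the fact that $r$ is bounded below by a positive constant.
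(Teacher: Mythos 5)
Your argument has a genuine gap at its very first step. You claim that if $u$ is not null-homotopic in $\mathbb{R}^2 \setminus \{\sigma_1,\ldots,\sigma_N\}$, then its winding number around at least one $\sigma_i$ is nonzero. This is false as soon as $N \geq 2$: the fundamental group is a free group on $N$ generators, and the winding numbers only detect its abelianization $\mathbb{Z}^N$. A commutator loop such as one in the class $a_1 a_2 a_1^{-1} a_2^{-1}$ is non-trivial but has winding number zero around every puncture, so your polar-lift argument produces no lower bound on $\theta(1)-\theta(0)$ and the chain of inequalities collapses. Since the lemma is invoked in the proof of Theorem \ref{thmain} for an \emph{arbitrary} non-trivial class $[\gamma]$, these cases cannot be excluded. (For $N=1$, or for classes with some nonzero winding number, your argument is correct, and the remaining steps --- the $H^1$ regularity of the lift, the inequality $|u'|^2 \geq r^2(\theta')^2$, the Cauchy--Schwarz bounds, and the use of the shorter arc --- are all fine.)

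The fix is to replace the winding-number input by the weaker, purely topological fact that the paper uses: a loop whose image is contained in a ball disjoint from all the punctures is null-homotopic in the punctured plane. Concretely, assuming $\Vert u \Vert_{L^\infty} > R$ and taking $s^*$ with $\vert u(s^*)\vert = \Vert u \Vert_{L^\infty}$, the open ball centered at $u(s^*)$ of radius $\vert u(s^*)\vert - R$ contains no $\sigma_i$, so the image of $u$ must exit it; this yields a point $\bar s$ with $\vert u(\bar s) - u(s^*)\vert \geq \vert u(s^*)\vert - R$, and a single Cauchy--Schwarz estimate gives \eqref{poinc} directly, with no polar coordinates needed. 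This argument covers all non-trivial homotopy classes, including those invisible to winding numbers.
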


\begin{proof}
Let us assume that $\Vert u \Vert_{L^\infty} > R$ (otherwise there is nothing to prove) and take $s^* \in [0,1]$ such that
$\vert u(s^*) \vert = \Vert u \Vert_{L^\infty}$. Now, let us consider the open ball $B \subset \mathbb{R}^2$ centered at $u(s^*)$ and with radius $r=\vert u(s^*) \vert - R$: clearly, such a ball does not contain any of the centers $\{\sigma_1,\ldots,\sigma_N\}$. Accordingly, the image of the path $u$ cannot be entirely contained in $B$, since otherwise $u$ would be a null-homotopic loop in $\mathbb{R}^2 \setminus \{\sigma_1,\ldots,\sigma_N\}$. We thus infer that there exists $\bar{s} \in [0,1]$ such that
$$
\vert u(\bar{s}) - u(s^*) \vert \geq \vert u(s^*) \vert - R.
$$
Then
\begin{align*}
\Vert u \Vert_{L^\infty} & = \vert u(s^*) \vert \leq R + \vert u(\bar{s}) - u(s^*) \vert \leq R + \left\vert \int_{s^*}^{\bar{s}} \vert u' \vert \,ds \right\vert \\
& \leq R + \vert s^* - \bar{s} \vert^{1/2} \left\vert\int_{s^*}^{\bar{s}} \vert u' \vert^2 \,ds \right\vert^{1/2} \leq R + \Vert u' \Vert_{L^2},
\end{align*}
thus proving \eqref{poinc}.
\end{proof}

\begin{proof}[Proof of Theorem \ref{thmain}]
Let us first observe that, since $V(u) > 0$ for every $u \in \mathbb{R}^2 \setminus \{\sigma_1,\ldots,\sigma_N\}$ and $h > 0$, with the notation of Section \ref{sec2} we have $\Omega_h = \Omega = \mathbb{R}^2 \setminus \{ \sigma_1,\ldots,\sigma_N\}$ and
$$
\Lambda_h' = \{ u \in H^1([0,1]) \, : \, u(0) = u(1), \; u(s) \neq \sigma_i\; \forall \ s \in [0,1], \, i=1,\ldots,N\}.
$$
Accordingly, we can consider the Maupertuis functional $\mathcal{M}_h$ as defined in \eqref{mau-func}, that is,
\begin{equation}
\label{mau-func2}
\mathcal{M}_h(u)=\int_0^1|u'|^2\,ds\, \int_0^1 \left(Z_h(u)+2hm\right)\,ds, \qquad u \in \Lambda_h',
\end{equation}
where, as in \eqref{def-Z}, 
\begin{equation} \label{def-Z2}
Z_h(u)=2mV(u)+\dfrac{1}{c^2}\, (V(u)+h)^2,
\end{equation}
for $u \in \Omega$. By standard arguments (cf. \cite[Theorem 1.4]{MW89}), the functional $\mathcal{M}_h$ is of class $C^1$, with
\begin{equation}\label{Mc1}
d\mathcal{M}_h(u)[v] = 2 \int_0^1 \langle u', v' \rangle \,ds \int_0^1 \left(Z_h(u)+2hm\right)\,ds + 
\int_0^1 |u'|^2\,ds \int_0^1 \langle \nabla Z_h(u),v \rangle \,ds.
\end{equation}

For further convenience, we now claim that $Z_h$ satisfies the following crucial properties: for every $i=1,\ldots,N$, there exist $\beta_i > 0$ and $r_i > 0$ such that
\begin{equation}\label{p-Z1}
Z_h(u) \geq \frac{\beta_i}{\vert u -\sigma_i \vert^2}, \quad \mbox{ if } 0 < \vert u - \sigma_i \vert < r_i,
\end{equation}
and, moreover,
\begin{equation}\label{p-Z2}
\lim_{u \to \sigma_i} \left( Z_h(u) + \frac{1}{2} \langle \nabla Z_h(u),u-\sigma_i \rangle \right) = - \infty.
\end{equation}
Indeed, \eqref{p-Z1} is an easy consequence of the fact that $\alpha > 1$. As far as \eqref{p-Z2} is concerned, recalling \eqref{def-Z2}, we immediately deduce that
\begin{equation} \label{eq-pr101}
\begin{array}{ll}
\displaystyle Z_h(u) + \frac{1}{2} \langle \nabla Z_h(u),u-\sigma_i \rangle = & \displaystyle \dfrac{1}{c^2}\, (V(u))^2+\left(2m+\dfrac{2h}{c^2}\right)\, V(u)+\dfrac{h^2}{c^2}\\
&\\
&\displaystyle + \left(\dfrac{V(u)}{c^2}+m+\dfrac{h}{c^2}\right)\, \langle \nabla V(u),u-\sigma_i\rangle,
\end{array}
\end{equation}
for every $u\in \Omega$ and $i=1,\ldots,N$. Now, let us observe that \eqref{potential} implies that for every $i=1,\ldots,N$ there exists a neighbourhood $\Sigma_i$ of $\sigma_i$ such that
\[
V(u)=\dfrac{\kappa_i}{\alpha \ |u-\sigma_i|^\alpha}+ \widetilde{W}(u),\quad \forall \ u\in \Sigma_i,\ u\neq \sigma_i,
\]
with $\widetilde W$ and $\nabla \widetilde W$ bounded in $\Sigma_i$. Hence, from \eqref{eq-pr101} we infer that there exist a constant $K\in \R$ and a function $W^*$ bounded in $\Sigma_i$ such that
\[
Z_h(u) + \frac{1}{2} \langle \nabla Z_h(u),u-\sigma_i \rangle = \dfrac{\kappa_i^2}{\alpha^2 c^2}\, (1-\alpha)\, \dfrac{1}{|u-\sigma_i|^{2\alpha}}+\dfrac{K}{|u-\sigma_i|^\alpha}+W^*(u),
\]
for every $u\in \Sigma_i$, $u\neq \sigma_i$. From this relation, recalling that $\alpha>1$ we plainly obtain \eqref{p-Z2}.

Let us now notice that from \eqref{p-Z2} we immediately infer that there exists $\varepsilon \in (0,1)$ such that, for every $i =1,\ldots,N$, 
\begin{equation}\label{puntadentro}
d\mathcal{M}_h(u)[u-\sigma_i] < 0, \quad \mbox{ if } \Vert u - \sigma_i \Vert \leq 2 \varepsilon, \quad \mbox{$u$ non-constant}.
\end{equation}
Indeed, by \eqref{Mc1} we have
$$
d\mathcal{M}_h(u)[u-\sigma_i] = 2 \int_0^1 |u'|^2\,ds \int_0^1 \left( Z_h(u)+2hm + \frac{1}{2} \langle \nabla Z_h(u),u-\sigma_i \rangle\right)\,ds,
$$
so that the conclusion follows from \eqref{p-Z2}, in view of the continuous embedding $H^1([0,1]) \subset L^\infty ([0,1])$.
Of course, such a $\varepsilon$ can be chosen so small that
\begin{equation}\label{disgiunte}
\Vert u - \sigma_i \Vert \leq 2 \varepsilon \; \Longrightarrow \; \Vert u - \sigma_j \Vert > 2 \varepsilon,\, \forall \ j \neq i,
\end{equation}
that is, the balls $\{B_{2\varepsilon}(\sigma_i)\}_{i=1}^N$ are disjoint. From now on, we assume that this condition is satisfied.

We are now in a position to start with the proof. Fixed a non-trivial homotopy class $[\gamma] \in \pi_1 (\mathbb{R}^2 \setminus  \{ \sigma_1,\ldots,\sigma_N\})$, and with $\varepsilon > 0$ as above, let us consider the set
$$
\Lambda''_{h,\gamma,\varepsilon} = \left\{ u \in \Lambda'_h \, : \, u \mbox{ lies in the homotopy class $[\gamma]$ and } \Vert u - \sigma_i \Vert \geq \varepsilon, \, \forall \ i =1,\ldots,N \right\}.
$$
Observe that, by the non-triviality of $\gamma$, any $u \in \Lambda''_{h,\gamma,\varepsilon}$ is non-constant 
Moreover, recalling that $Z_h$ is always positive, we plainly deduce the estimate
\begin{equation}\label{coerciv}
\mathcal{M}_h(u) \geq 2hm \int_0^1  |u'|^2\,ds > 0, \quad \forall \ u \in \Lambda''_{h,\gamma,\varepsilon}.
\end{equation}
With this in mind, let us consider the minimization problem
$$
\inf_{u \in \Lambda''_{h,\gamma,\varepsilon}} \mathcal{M}_h.
$$
We split our next arguments into some steps.
\smallbreak
\noindent
\underline{Step 1}: \emph{convergence of minimizing sequences.} 
Let us consider an arbitrary minimizing sequence $\{u_n\} \subset \Lambda''_{h,\gamma,\varepsilon}$ for the minimization problem
$$
\inf_{u \in \Lambda''_{h,\gamma,\varepsilon}} \mathcal{M}_h.
$$
By the estimate \eqref{coerciv}, the sequence $\Vert \dot u_n \Vert_{L^2}$ is bounded; using \eqref{poinc}, it thus follows that 
$\Vert u_n \Vert_{L^\infty}$ is bounded as well. Since $\Vert u_n \Vert_{L^2} \leq \Vert u_n \Vert_{L^\infty}$, the sequence $\{u_n\}$ is thus bounded in the $H^1$-norm.
Hence, there exists $\bar{u} \in H^1([0,1])$, with $\bar{u}(0) = \bar{u}(1)$, such that (up to subsequences, which we do not relabel) $u_n$ converges to $\bar{u}$ weakly in $H^1$ and, by compactness, strongly in $L^\infty$.

We now claim that $\bar{u}$ does not have collisions, that is, $\bar{u}(s) \neq \sigma_i$ for every $s \in [0,1]$ and $i \in \{1,\ldots,N\}$
(and, hence, $\bar{u} \in \Lambda_h'$).
Indeed, let us assume by contradiction that this is not true. Then, since \eqref{p-Z1} holds true, the same argument used in \cite[Lemma 5.3]{ACZ93} implies that
$$
\int_0^1 Z_h(u_n) \,ds \to +\infty.
$$
Hence, a contradiction with the fact that $\{u_n\}$ is a minimizing sequence is obtained as long as we prove that 
$$
\liminf_{n \to +\infty} \int_0^1  |u'_n|^2\,ds > 0.
$$
To see that this is true, we argue again by contradiction, assuming that (up to subsequences) $\int_0^1 |\dot u_n|^2\,ds \to 0$.
Then, for any $s_1, s_2 \in [0,1]$, passing to the limit in the inequality
$$
\vert u_n(s_2) - u_n(s_1) \vert \leq \left\vert \int_{s_1}^{s_2} \vert u'_n \vert \,ds  \right\vert\leq \int_0^1 |u'_n|^2\,ds 
$$
we see that $\bar{u}$ is constant and, thus, $\bar{u} \equiv \sigma_{i^*}$ for some fixed center $\sigma_{i^*}$.
Moreover, since the derivative of a constant function is identically zero and $\Vert \dot u_n \Vert_{L^2} \to 0$, we obtain that the convergence of $u_n$ to $\sigma_{i^*}$ is strong in $H^1$, a contradiction with the fact that $\Vert u_n - \sigma_{i^*} \Vert \geq \varepsilon$.

Let us notice that, from the just proved fact that $\bar{u}$ does not have collisions, and since homotopy classes are open w.r.t. the $L^\infty$-topology, the $L^\infty$-convergence of $u_n$ to $\bar{u}$ implies that $\bar{u}$ lies in the homotopy class $[\gamma]$, as well. 
\smallbreak
\noindent
\underline{Step 2}: \emph{construction of a minimizing sequence $\{\widetilde u_n\}$ away from the boundary $\Vert \widetilde u_n - \sigma_i \Vert \geq \varepsilon$.} 
Precisely, in this step we show that there exists a minimizing sequence $\{\widetilde u_n\} \subset \Lambda''_{h,\gamma,\varepsilon}$ such that, for some $\lambda \in (1,2]$, the inequality
\begin{equation}\label{step2}
\Vert \widetilde u_n - \sigma_i \Vert \geq \lambda \varepsilon, \quad \forall \ i \in \{1,\ldots,N\}.
\end{equation}
is satisfied for $n$ large enough.

To prove this, let us consider an arbitrary minimizing sequence $\{ u_n\} \subset \Lambda''_{h,\gamma,\varepsilon}$.
By Step 1, we can assume that $u_n$ converges, weakly in $H^1$ and strongly in $L^\infty$, to a limit function $\bar{u}$ which lies in the homotopy class $[\gamma]$. 
Since homotopy classes are open w.r.t. the $L^\infty$-topology, we can 
fix $\rho > 0$ such that
\begin{equation}\label{classestabile}
\Vert u - \bar{u} \Vert_{L^\infty} \leq \rho \; \Longrightarrow \; u \mbox{ lies in the homotopy class $[\gamma]$}.
\end{equation}
Of course, with such a choice of $\rho$ there exists a corresponding integer $n_\rho \geq 0$ such that
\begin{equation}\label{rhomezzi}
\Vert u_n - \bar{u} \Vert_{L^\infty} \leq \frac{\rho}{2}, \quad \forall \ n \geq n_\rho.
\end{equation}
As a consequence, we deduce that
\[
\Vert u_n - \sigma_i \Vert_{L^\infty}\leq \Vert u_n - \bar{u} \Vert_{L^\infty}+\Vert \bar{u} - \sigma_i \Vert_{L^\infty} \leq \frac{\rho}{2}+\Vert \bar{u} - \sigma_i \Vert_{L^\infty},
\]
implying that 
\[
\sup_{n\geq n_\rho} \max_i \Vert u_n - \sigma_i \Vert_{L^\infty}<+\infty.
\]
Hence, we can set 
$$
M = \max \left\{\sup_{n\geq n_\rho} \max_i \Vert u_n - \sigma_i \Vert_{L^\infty},\frac{\rho}{2}\right\}
$$
and
$$
\lambda = 1 + \frac{\rho}{2M}.
$$
We are now ready to construct the sequence $\{\widetilde u_n\}$ (defined for $n \geq n_\rho$).

So, let us assume that, for some $n \geq n_\rho$, the set 
$$
I_n = \{i \in \{1,\ldots,N\} : \Vert u_n - \sigma_i \Vert < \lambda \varepsilon \}
$$
is non-empty (otherwise, we can simply take $\widetilde u_n = u_n$). Since $\lambda \leq 2$, by \eqref{disgiunte} we have that there exists exactly one $i(n) \in \{1,\ldots,N\}$ such that 
$$
\Vert u_n - \sigma_{i(n)} \Vert < \lambda\varepsilon \quad \mbox{ while } \quad \Vert u_n - \sigma_j \Vert > \lambda\varepsilon, \, \forall \ j \neq i(n).
$$
Let us set
$$
k_n = \frac{\lambda \varepsilon}{\Vert u_n - \sigma_{i(n)} \Vert};
$$
since $\Vert u_n - \sigma_{i(n)} \Vert \geq \varepsilon$ by the definition of $\Lambda''_{h,\gamma,\varepsilon}$, we have
\begin{equation}\label{kn}
0 < k_n - 1 < \lambda - 1 = \frac{\rho}{2M}.
\end{equation}
Finally, let us define
\begin{equation}\label{deftildeu}
\widetilde u_n = \sigma_{i(n)} + k_n (u_n - \sigma_{i(n)}).
\end{equation}
Of course
\begin{equation}\label{utile}
\Vert \widetilde u_n - \sigma_{i(n)} \Vert = \lambda \varepsilon 
\end{equation}
and so, since $\lambda \leq 2$ and using again by \eqref{disgiunte}, 
\begin{equation}\label{utile2}
\Vert \widetilde u_n - \sigma_{j} \Vert > 2 \varepsilon, \quad \forall \ j \neq i(n).
\end{equation}
Moreover, using \eqref{rhomezzi} and \eqref{kn}
\begin{align*}
\Vert \widetilde u_n - \bar{u} \Vert_{L^\infty} & = \Vert u_n - \bar{u} + (1-k_n)(\sigma_{i(n)} - u_n) \Vert_{L^\infty} \\
& \leq \Vert u_n - \bar{u}  \Vert_{L^\infty} + (k_n-1) \Vert u_n - \sigma_{i(n)} \Vert_{L^\infty} \leq \frac{\rho}{2} + \frac{\rho}{2M}M = \rho.
\end{align*}
By \eqref{classestabile}, we thus have that $\widetilde u_n$ has the homotopy class $[\gamma]$; recalling \eqref{utile} and \eqref{utile2}, we conclude that
$u_n \in \Lambda''_{h,\gamma,\varepsilon}$. 
It thus remains to show that $\{\widetilde u_n\}$ is still a minimizing sequence: to this end, we are going to prove that
\begin{equation}\label{livelloscende}
\mathcal{M}_h(\widetilde u_n) \leq \mathcal{M}_h(u_n).
\end{equation}
So, let us assume that, for some $n$, $\widetilde u_n$ is defined as in \eqref{deftildeu} (otherwise $\widetilde u_n = u_n$ and there is nothing to prove). Accordingly, we define, for every $t \in [1,k_n]$, $U_t = t(u_n-\sigma_{i(n)})+\sigma_{i(n)}$ in such a way that
$U_1 = u_n$ and $U_{k_n} = \widetilde u_n$. Then, we have
\begin{equation} \label{eq-nuovissima}
\frac{d}{dt} \mathcal{M}_h(U_t) = d\mathcal{M}_h(U_t)[u_n-\sigma_{i(n)}] = \frac{1}{t} d\mathcal{M}_h(U_t)[t(u_n-\sigma_{i(n)})], 
\end{equation}
with $t \in [1,k_n]$. From \eqref{utile} we deduce that
$$
\Vert U_t - \sigma_{i(n)} \Vert \leq k_n \Vert u_n-\sigma_{i(n)} \Vert = \Vert \widetilde u_n - \sigma_{i(n)} \Vert = \lambda \varepsilon \leq 2 \varepsilon,
$$
where the last inequality follows from the fact that $\lambda \leq 2$. Since
\[
d\mathcal{M}_h(U_t)\left[U_t - \sigma_{i(n)}\right]=d\mathcal{M}_h(U_t)\left[t(u_n - \sigma_{i(n)})\right],
\]
from \eqref{puntadentro} and \eqref{eq-nuovissima} we obtain
$$
\frac{d}{dt} \mathcal{M}_h(U_t) < 0, \quad \forall k \in [1,k_n].
$$
This implies that $\mathcal{M}_h(U_{k_n}) < \mathcal{M}_h(U_1)$ and thus \eqref{livelloscende}.
\smallbreak
\noindent
\underline{Step 3}: \emph{construction of a minimizing sequence $\{ \hat u_n\}$ away from the boundary $\Vert \hat u_n - \sigma_i \Vert \geq \varepsilon$ and such that $d \mathcal{M}_h(\hat u_n) \to 0$.} Precisely, in this step we show that there exists a minimizing sequence $\{\widehat u_n\} \subset \Lambda''_{h,\gamma,\varepsilon}$ such that, for some $\lambda^* > 1$, the inequality
\begin{equation}\label{step3}
\Vert \widehat u_n - \sigma_i \Vert \geq \lambda^* \varepsilon, \quad \forall \ i \in \{1,\ldots,N\}.
\end{equation}
is satisfied for $n$ large enough and $d \mathcal{M}_h (\widehat u_n) \to 0$.

To this end, we consider a minimizing sequence $\{ \widetilde u_n \} \subset \Lambda''_{h,\gamma,\varepsilon}$ constructed as in Step 2,
and thus satisfying \eqref{step2}. By Step 1, we can assume that $\{ \widetilde u_n \}$ converges, weakly in $H^1$ and strongly in $L^\infty$, to a limit function $u^*$ without collisions and lying in the homotopy class $[\gamma]$. Hence, we can fix $\rho^* > 0$ so small that
the inequality
\begin{equation}\label{eke1}
\vert \widetilde u_n(s) - \sigma_i \vert \geq \rho^*,  \quad \forall \ s \in [0,1], \, i \in \{1,\ldots,N\},
\end{equation}
is satisfied if $n$ is large enough. Moreover, up to some relabeling of the sequence we can also assume that
\begin{equation}\label{eke0}
\mathcal{M}_h (\widetilde u_n) \leq \inf_{u \in \Lambda''_{h,\gamma,\varepsilon}} \mathcal{M}_h (u) + \frac{1}{\sqrt{n}},
\end{equation}
for $n$ large enough.

With this in mind, let us consider the set
$$
\Lambda^* = \{ u \in \Lambda''_{h,\gamma,\varepsilon} \, : \,  \vert \widetilde u(s) - \sigma_i \vert \geq \rho^*/2,  \quad \forall \ s \in [0,1], \, i \in \{1,\ldots,N\} \}.
$$
In view of the continuous embedding $H^1 \subset L^\infty$ (and recalling the fact that homotopy classes are open with respect to the $L^\infty$-topology), $M$ is (strongly) closed in $H^1$; moreover, by \eqref{eke1}, the sequence $\{ \widetilde u_n \}$ belongs to $\Lambda^*$ for $n$ large enough (and it is thus a minimizing sequence also for $\inf_{\Lambda^*} \mathcal{M}_h$, since $\Lambda^* \subset \Lambda''_{h,\gamma,\varepsilon}$). Taking into account \eqref{eke0}, by Ekeland variational principle (as stated, for instance, \cite[Theorem 4.1 and Remark 4.1]{MW89}) we can find a further sequence 
$\{ \widehat u_n \} \subset \Lambda^*$ such that
\begin{equation}\label{eke2}
\mathcal{M}_h(\widehat u_n) \leq \mathcal{M}_h (\widetilde u_n), \qquad \Vert \widehat u_n - \widetilde u_n \Vert \leq \frac{1}{\sqrt{n}}
\end{equation}
and
\begin{equation}\label{eke3}
\mathcal{M}_h(w) - \mathcal{M}_h(\widehat u_n) \geq -\frac{1}{\sqrt{n}} \Vert w - \widehat u_n \Vert, \quad \forall \ w \in \Lambda^*.
\end{equation}
By the first condition \eqref{eke2}, we deduce that the sequence $\{ \widehat u_n \}$ is still minimizing. Moreover, taking into account \eqref{step2}, the second condition in \eqref{eke2} yields
\begin{equation}\label{eke4}
\Vert \widehat u_n - \sigma_i \Vert \geq \frac{1+\lambda}{2} \varepsilon, \quad \forall \ i \in \{1,\ldots,N\}.
\end{equation}
for $n$ large enough, and thus \eqref{step3} for $\lambda^* = (1+\lambda)/2>1$. Finally, again from the second condition \eqref{eke2} together with the fact that $\widetilde u_n \to u^*$ strongly in $L^\infty$, we infer that $\widehat u_n \to u^*$ strongly in $L^\infty$ as well:
from \eqref{eke1}, we thus get 
\begin{equation}\label{eke5}
\vert \widehat u_n(s) - \sigma_i \vert \geq \frac{3}{4}\rho^* > \frac{\rho^*}{2},  \quad \forall \ s \in [0,1], \, i \in \{1,\ldots,N\},
\end{equation}
for $n$ large enough.

Now, let us observe that property \eqref{eke4} and \eqref{eke5} together imply that the sequence $\{ \widehat u_n \}$ belongs (at least for $n$ large enough) to the interior of the set $M$. From this observation, taking into account \eqref{eke3}, it easily follows that 
\begin{equation}\label{ekefinale}
d\mathcal{M}_h(\widehat u_n) \to 0.
\end{equation}
Indeed, let us fix an arbitrary $v \in H^1([0,1])$ with $v(0) = v(1)$ and $\Vert v \Vert = 1$.
Then, $\widetilde u_n + t v \in \Lambda^*$ if $\vert t \vert$ is small enough, and thus, using \eqref{eke3},
$$
d\mathcal{M}_h(\widehat u_n)[v] = \lim_{t \to 0^+} \frac{\mathcal{M}_h(\widehat u_n + tv) - \mathcal{M}_h(\widehat u_n)}{t} \geq -\frac{1}{\sqrt{n}}
$$
and
$$
d\mathcal{M}_h(\widehat u_n)[v] = \lim_{t \to 0^-} \frac{\mathcal{M}_h(\widehat u_n + tv) - \mathcal{M}_h(\widehat u_n)}{-\vert t \vert} \leq \frac{1}{\sqrt{n}}.
$$
This implies 
$$
\Vert d\mathcal{M}_h (\widehat u_n)\Vert = \sup_{\Vert v \Vert = 1} \vert d\mathcal{M}_h(\widehat u_n)[v] \vert \leq \frac{1}{\sqrt{n}}
$$
and thus \eqref{ekefinale}, as desired. 
\smallbreak
\noindent
\underline{Step 4}: \emph{strong convergence of the sequence $\{\widehat u_n \}$ and conclusion.}
Let us consider the minimizing sequence $\{\widehat u_n\} \subset \Lambda^* \subset \Lambda''_{h,\gamma,\varepsilon}$ constructed in Step 3; as already observed, $\widehat u_n$ converges weakly in $H^1$ and strongly in $L^\infty$ to a limit function $u^*$ without collisions and lying in the homotopy class $[\gamma]$. We now claim that
\begin{equation}\label{step4}
\int_0^1 \vert (\widehat u_n)' \vert^2 \,ds \to \int_0^1 \vert (u^*)' \vert^2 \,ds.
\end{equation}
From this, the conclusion follows easily. Indeed, \eqref{step4} implies (by a well-known property of Hilbert spaces) that $\widetilde u_n \to u^*$ strongly in $H^1$. As a consequence, $\mathcal{M}_h(\widehat u_n) \to \mathcal{M}_h(u^*)$ and thus $u^*$ is a minimum point for $\mathcal{M}_h$ on $\Lambda''_{h,\gamma,\varepsilon}$. Moreover, again by the strong $H^1$-convergence together with \eqref{eke4}, it holds
$$
\Vert u^*- \sigma_i \Vert \geq \frac{1+\lambda}{2} \varepsilon > \varepsilon, \quad \forall \ i \in \{1,\ldots,N\},
$$
so that $u^*$ belongs to the interior of the set $\Lambda''_{h,\gamma,\varepsilon}$. Hence, $u^*$ is a critical point (in fact, a local minimum) for $\mathcal{M}_h$ and, of course, it is non-constant since the homotopy class $[\gamma]$ is non-trivial. Thus, the conclusion follows from Proposition \ref{pr-maup}.

So, let us prove \eqref{step4}. 
Since $\{\widehat u_n \}$ is bounded in $H^1$, by \eqref{ekefinale} we have
\begin{equation}\label{fine0}
d\mathcal{M}_h(\widehat u_n)[\widehat u_n-u^*] \to 0.
\end{equation} 
On the other hand, recalling formula \eqref{Mc1} we have
\begin{align*}
d\mathcal{M}_h(\widehat u_n)[\widehat u_n] & = 2 \int_0^1 \vert (\widehat u_n)' \vert^2 \,ds \int_0^1 \left(Z_h(\widehat u_n)+2hm\right)\,ds \\ & \qquad \qquad + \int_0^1 |(\widehat u_n)'|^2\,ds \int_0^1 \langle \nabla Z_h(\widehat u_n),\widehat u_n \rangle \,ds \\
& = 2 \mathcal{M}_h(\widehat u_n) + \int_0^1 |(\widehat u_n)'|^2\,ds \int_0^1 \langle \nabla Z_h(\widehat u_n),\widehat u_n \rangle \,ds
\end{align*}
and
\begin{align*}
d\mathcal{M}_h(\widehat u_n)[u^*] & = 2 \int_0^1 \langle (\widehat u_n)',(u^*)' \rangle  \,ds \int_0^1 \left(Z_h(\widehat u_n)+2hm\right)\,ds \\ & \qquad \qquad + \int_0^1 |(\widehat u_n)'|^2\,ds \int_0^1 \langle \nabla Z_h(\widehat u_n),u^* \rangle \,ds,
\end{align*}
so that 
\begin{equation}\label{fine3}
\begin{split}
d\mathcal{M}_h(\widehat u_n)[\widehat u_n-u^*] &= 2 \mathcal{M}_h(\widehat u_n) + \int_0^1 |(\widehat u_n)'|^2\,ds \int_0^1 \langle \nabla Z_h(\widehat u_n),\widehat u_n - u^* \rangle \,ds \\
& \qquad \qquad - 2 \int_0^1 \langle  (\widehat u_n)',(u^*)'  \rangle  \,ds \int_0^1 \left(Z_h(\widehat u_n)+2hm\right)\,ds.
\end{split}
\end{equation}
Now, by the facts that $\{\widehat u_n\}$ is bounded in $H^1$ and converges  to $u^*$ strongly in $L^\infty$, we have
$$
\int_0^1 |(\widehat u_n)'|^2\,ds \int_0^1 \langle \nabla Z_h(\widehat u_n),\widehat u_n - u^* \rangle \,ds \to 0,
$$
while from the weak convergence in $H^1$ we obtain 
\begin{align*}
2 \int_0^1\langle  (\widehat u_n)',(u^*)'  \rangle  \,ds \int_0^1 \left(Z_h(\widehat u_n)+2hm\right)\,ds \rightarrow & \ 2 \int_0^1 \vert (u^*)' \vert^2 \,ds \int_0^1 \left(Z_h(u^*)+2hm\right)\,ds \\ & = \mathcal{M}_h(u^*).
\end{align*}
Therefore, \eqref{fine0} and \eqref{fine3} yields
$$
\mathcal{M}_h(\widehat u_n) \to \mathcal{M}_h(u^*)
$$
and thus, again by the convergence $\widehat u_n \to u^*$ in $L^\infty$,
\begin{align*}
\int_0^1 \vert (\widehat u_n)' \vert^2 \,ds &= \mathcal{M}_h(\widehat u_n) \left(\int_0^1 \left(Z_h(\widehat u_n)+2hm\right)\,ds\right)^{-1} \\
& \qquad \rightarrow \  \mathcal{M}_h(u^*) \left(\int_0^1 \left(Z_h(u^*)+2hm\right)\,ds\right)^{-1} = \int_0^1 \vert (u^*)' \vert^2 \,ds
\end{align*}
finally proving \eqref{step4}.
\end{proof}

\begin{remark}\label{rem-nonrel1}
	\rm
Let us recall, as already observed at the beginning of Section \ref{sec2}, that the difference $h$ between the relativistic energy $\mathfrak{E}$ and the rest energy converges to the classical mechanical energy in the non-relativistic limit (i.e. when $c\to +\infty$). Indeed, a simple computation shows that 
	\[ 
	\lim_{c\to +\infty} \left(\dfrac{mc^2}{\sqrt{1-|y|^2/c^2}}-mc^2\right)=\dfrac{1}{2}\, m|y|^2,\quad \forall \ y\in \R^n.
	\]
From this point of view, the classical system 
\begin{equation}\label{eq-class}
m \ddot x = \nabla V(x)
\end{equation} 
can formally be seen as the non-relativistic limit of \eqref{intro_rel}. A natural problem is then to study the asympotic behaviour of the solutions given by Theorem \ref{thmain} in the non-relativistic limit, in order to investigate whether they converge to solutions of \eqref{eq-class}. This is a very delicate question and it seems to be difficult to give a general answer. We refer to the study of the model problem of Section \ref{sec4} (see Remark \ref{rem-nonrel1}) for a partial result in this direction when $N=1$ and $W\equiv 0$.
\end{remark}

\begin{remark}\label{remstrong}
\rm
As well known since the pioneering paper by Gordon \cite{Gor75}, a condition on the strength of the singularity is typically needed when applying variational methods to problems with singular potentials. In the classic (i.e., non-relativistic) case, for potentials of the type \eqref{potential} the usual ``strong force'' condition reads as $\alpha \geq 2$; in our relativistic setting, instead, it is enough to assume the weaker condition $\alpha > 1$. This is a consequence of the fact that the expression $Z_h$, defined in \eqref{def-Z2}  and appearing in the Maupertuis functional \eqref{mau-func2}, behaves near the singularity as the square of the potential $V$: as a consequence, the potential $Z_h$ satisfies the property \eqref{p-Z1} in the proof of Theorem \ref{thmain}. Notice, however, that for \eqref{p-Z1} to be true $\alpha\geq 1$ would be enough, while Theorem \ref{thmain} does not cover the (physically relevant) Newtonian case $\alpha = 1$. 
The reason for this is that the further property \eqref{p-Z2}, required as well in the proof of Theorem \ref{thmain}, is valid only when $\alpha > 1$.
\end{remark}

\begin{remark}
\rm
As observed in Remark \ref{remhomotopy}, Theorem \ref{thmain} in the case $N=1$ just provides the existence of \emph{one} periodic solution of energy $\mathfrak{E}$, for any $\mathfrak{E} > mc^2$.
Actually, this conclusion is a direct consequence of \cite[Theorem 15.1]{ACZ93} (based on the result in \cite{Pis93}).
Indeed, as recalled in the proof of Proposition \ref{pr-maup}, critical points of the Maupertuis functional give rise (via a linear reparameterization of time) to periodic solutions of the usual second order system
$$
z'' = \nabla Z_h(z)
$$
with energy $2hm > 0$. Since, as observed in the proof of Theorem \ref{thmain}, the potential $Z_h$ satisfies assumptions \eqref{p-Z1}- \eqref{p-Z2}, and, moreover, $Z_h(u) > 0$ and $\nabla Z_h(u) \to 0$ for $\vert u \vert \to +\infty$, all the assumptions 
required in \cite[Theorem 15.1]{ACZ93} are satisfied (by a translation, we can assume in this case $\sigma_1 = 0$), and thus the conclusion follows. It is worth noticing that the arguments used in the proof of \cite[Theorem 15.1]{ACZ93}, relying on min-max critical point theory, are suitable also to treat the analogous problem in arbitrary space dimension.
\end{remark}


\begin{remark}\label{remsmooth} \rm Let us observe that by means of a direct application of standard results it is possible to prove the existence of periodic solutions with fixed energy for the equation \eqref{2.1} when $V$ is a globally defined smooth potential. Indeed, from \cite[Theorem 1.4]{Ben83} applied to the energy functional $\mathcal{E}_h$ (see also Theorem 1.1 in \cite{ACZ93}) we immediately deduce the validity of the following result:
	\begin{quotation}
	\noindent
		\textbf{Theorem 1.} \textit{Let $V:\R^n\to \R$, $V\in C^1(\R^n)$, $h\in \R$ and $Z_h$ as in \eqref{def-Z}. Assume that at least one connected component of the set 
		\[
		\{x\in \R^n:\ Z_h(x)+2hm\geq 0\}
		\]
		is non-empty and compact and 
		\[
		\nabla Z_h(x)\neq 0,\quad \forall \ x\in Z_h^{-1}(-2hm).
		\]
		Then, there exists a periodic solution $q:[0,1]\to \R^n$ of the geodesic equation \eqref{EL-eqn}.}
	\end{quotation}
\noindent
Recalling Proposition \ref{pr-var}, Theorem 1 implies the existence of a periodic solution of \eqref{2.1} with relativistic energy $\mathfrak{E} = h+mc^2$.

The assumptions of Theorem 1 can be written in term of the original potential $V$: indeed, recalling \eqref{def-Z}, it is easy to see that
\[
\{x\in \R^n:\ Z_h(x)+2hm\geq 0\} = \{x\in \R^n:\ V(x)+h\geq 0\} \cup \{x\in \R^n:\ V(x)+h\leq -2mc^2\}
\]
and that
\[
\nabla Z_h(x)=\dfrac{2}{c^2}\, (V(x)+h+mc^2)\, \nabla V(x),\quad \forall \ x\in \R^n.
\]
Now, let us recall that solutions $x$ of \eqref{2.1} with relativistic energy $h+mc^2$ are confined in the Hill's region $\Omega_h$ defined in \eqref{eq-defomegah}. Hence, if there exists $h\in \R$ are such that the set
	\[
	\{x\in \R^n:\ V(x)+h\geq 0\}
	\]
is non-empty and compact and 
\[
\nabla V(x)\neq 0,\quad \forall \ x\in V^{-1}(-h),
\]
then the assumptions of Theorem 1 are satisfied and the existence of a periodic solution of \eqref{2.1} with energy $\mathfrak{E} = h+mc^2$ is proved.

Let us notice that when $\beta>1$ and $h>0$ this result applies in particular to the pure power potential $V:\R^n\to \R$ defined by
\[
V(x)=-\frac{|x|^\beta}{\beta},
\]
for every $x\in \R^n$. 
\end{remark}

\section{Remarks on the optimality of the condition $\mathfrak{E} > mc^2$}\label{sec4}

With the aim of establishing the sharpness of the assumption $\mathfrak{E} > mc^2$ (equivalently, $h > 0$) in Theorem \ref{thmain}, in this section we focus on the model equation
\begin{equation}
\label{eq-4.1}
\frac{\rm d}{{\rm d}t}\left(\frac{m\dot{x}}{\sqrt{1-|\dot{x}|^2/c^2}}\right)=-\frac{\kappa x}{|x|^{\a+2}}, \qquad x \in \mathbb{R}^2 \setminus \{0\},
\end{equation}
with $\kappa > 0$ and $\alpha > 1$, which corresponds to \eqref{eq-ncariche} with a single centre at the origin (that is, $N=1$ and $\sigma_1=0$) and $W\equiv 0$. Let us recall that the energy of solutions $x$ of \eqref{eq-4.1} in this case is given by
\begin{equation}
\label{en}
E(x,\dot{x})=\frac{mc^2}{\sqrt{1-|\dot{x}|^2/c^2}}-\frac{\kappa}{\alpha|x|^{\alpha}}.
\end{equation}
Obviously, equation \eqref{eq-4.1} can have two different types of periodic solutions: the circular ones and the non-circular ones. 
The following result holds true.

\begin{proposition}\label{prop-esist} Let $\alpha > 1$. Then:
	\begin{enumerate}
		\item equation \eqref{eq-4.1} has circular periodic solutions of energy $\mathfrak{E}$ if and only if
		\begin{equation} \label{eq-suffcirc}
		\mathfrak{E} >\eta(\alpha),
		\end{equation} 
		where $\eta: (1,+\infty)\to \R$ is defined by
		\[
		\eta(\alpha)=\left\{
		\begin{array}{ll}
		\displaystyle 2mc^2\, \dfrac{\sqrt{\alpha-1}}{\alpha} &\quad\mbox{if $\alpha \in (1,2)$,}\\
		&\\
		mc^2&\quad\mbox{if $\alpha \geq 2$.}
		\end{array}
		\right.
		\]
		\item equation \eqref{eq-4.1} does not admit non-circular periodic solutions if $\alpha \geq 2$.
	\end{enumerate}
\end{proposition}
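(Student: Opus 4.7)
The argument splits naturally into Part 1 (existence range for circular orbits) and Part 2 (nonexistence of bounded noncircular motion when $\alpha\geq 2$); both rely only on the conservation laws of the central problem \eqref{eq-4.1}, so no variational machinery is needed.

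For Part 1, I would plug the ansatz $x(t)=r(\cos\omega t,\sin\omega t)$ into \eqref{eq-4.1}. Since $|\dot x|=r\omega=:v$ is constant, writing $\gamma=1/\sqrt{1-v^2/c^2}$, the equation collapses to the scalar balance $m\gamma v^2=\kappa/r^{\alpha}$. Using $v^2=c^2(\gamma^2-1)/\gamma^2$ gives $\kappa/r^\alpha=mc^2(\gamma^2-1)/\gamma$, and substituting into \eqref{en} expresses the energy as a one-variable function of $\gamma\in(1,+\infty)$:
\[
\mathfrak{E}(\gamma)=\frac{mc^2}{\alpha\gamma}\bigl((\alpha-1)\gamma^2+1\bigr).
\]
Computing $\mathfrak{E}'(\gamma)=(mc^2/\alpha)((\alpha-1)-\gamma^{-2})$, the unique critical point is $\gamma^{*}=(\alpha-1)^{-1/2}$, which lies in $(1,+\infty)$ iff $\alpha<2$. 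For $\alpha\geq 2$, $\mathfrak{E}$ is therefore strictly increasing from $\mathfrak{E}(1^+)=mc^2$ to $+\infty$, so the range of attainable energies is $(mc^2,+\infty)=(\eta(\alpha),+\infty)$. For $\alpha\in(1,2)$, the minimum value of $\mathfrak{E}$ is $\mathfrak{E}(\gamma^{*})=2mc^2\sqrt{\alpha-1}/\alpha=\eta(\alpha)$, which, combined with $\mathfrak{E}(\gamma)\to+\infty$ as $\gamma\to\infty$, yields the corresponding range. Both iff statements then follow by reading off the image of $\mathfrak{E}$.

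For Part 2, I would work in polar coordinates $(r,\theta)$ and use the conservation of angular momentum $L=m\gamma r^2\dot\theta$ (valid since the force is central). Together with the energy law \eqref{en}, which reads $mc^2\gamma=\mathfrak{E}+V(r)$ for $V(r)=\kappa/(\alpha r^\alpha)$, and the kinematic identity $\dot r^2+r^2\dot\theta^2=c^2(1-\gamma^{-2})$, a direct computation yields
\[
\bigl(\mathfrak{E}+V(r)\bigr)^2\dot r^2=c^2\Phi(r), \qquad \Phi(r)=\bigl(\mathfrak{E}+V(r)\bigr)^2-m^2c^4-\frac{L^2c^2}{r^2}.
\]
A noncircular periodic orbit would require the radial motion to oscillate in a bounded interval $(r_-,r_+)$ with $\Phi>0$ inside and simple zeros at the endpoints. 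Now $\Phi(0^+)=+\infty$ (because $2\alpha>2$ makes $V^2$ dominate $L^2c^2/r^2$) and $\Phi(+\infty)=\mathfrak{E}^2-m^2c^4$. Writing $\Phi'(r)=(2/r^{\alpha+1})g(r)$ with $g(r)=L^2c^2r^{\alpha-2}-\kappa(\mathfrak{E}+V(r))$, one computes
\[
g'(r)=L^2c^2(\alpha-2)r^{\alpha-3}+\frac{\kappa^2}{r^{\alpha+1}}>0
\]
for every $r>0$ when $\alpha\geq 2$. Thus $g$ is strictly increasing, so $\Phi'$ changes sign at most once (from negative to positive), $\Phi$ has at most a unique local extremum (a global minimum), and consequently at most two zeros in $(0,+\infty)$. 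The admissible set $\{\Phi\geq 0\}$ must then be of the form $(0,+\infty)$, $(0,r_1]$, $[r_2,+\infty)$, or $(0,r_1]\cup[r_2,+\infty)$. None of these contains a \emph{bounded} subinterval with two-sided turning points, so there is no bounded noncircular radial motion and hence no noncircular periodic orbit.

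The main obstacle is the sign analysis driving Part 2: the argument stands or falls with the strict monotonicity of $g$, which is precisely what the hypothesis $\alpha\geq 2$ buys. In the complementary range $\alpha\in(1,2)$ the first term in $g'$ becomes negative, $g$ may have a local maximum, and $\Phi$ could in principle display two local extrema and hence a bounded oscillation window; this matches the fact that the proposition restricts the nonexistence statement to $\alpha\geq 2$. A very minor bookkeeping point in Part 1 is the exclusion of the degenerate limit $\gamma=1$ (corresponding to $v=0$, $r=+\infty$), which is needed to get the strict inequality $\mathfrak{E}>\eta(\alpha)=mc^2$ in the case $\alpha\geq 2$.
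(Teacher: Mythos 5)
Your proof is correct and follows the same two-pronged strategy as the paper---reduce circular orbits to computing the range of a one-variable energy function, and rule out non-circular bounded motion by showing that the effective radial function $\Phi$ has at most one critical point, a global minimum---but both halves are executed with different, and in fact cleaner, parameterizations. In Part 1 the paper parameterizes circular orbits by the radius, solves the force balance \eqref{relation} for $\omega^2(r)$ and studies $E=\kappa f(r^\alpha)$ with $f(t)=\frac{2m^2c^4t}{-\kappa^2+\kappa\sqrt{\kappa^2+4m^2c^4t^2}}-\frac{1}{\alpha t}$, whose monotonicity analysis requires some square-root manipulation; your substitution $\gamma=1/\sqrt{1-v^2/c^2}$ turns the same one-parameter family into the rational function $\mathfrak{E}(\gamma)=\frac{mc^2}{\alpha\gamma}\bigl((\alpha-1)\gamma^2+1\bigr)$, and the whole case distinction reduces to whether the critical point $\gamma^{*}=(\alpha-1)^{-1/2}$ lies in $(1,+\infty)$. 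The only step to make explicit is that $\gamma\mapsto r$ via $\kappa/r^\alpha=mc^2(\gamma^2-1)/\gamma$ is a bijection of $(1,+\infty)$ onto $(0,+\infty)$, so the image of $\mathfrak{E}(\gamma)$ is indeed the full set of attainable energies. In Part 2 the paper passes to a symplectic polar change of variables and then treats $\alpha=2$ by a separate quadratic-polynomial argument and $\alpha>2$ with sub-cases on the sign of $\mathfrak{E}$; your derivation of $(\mathfrak{E}+V)^2\dot r^2=c^2\Phi(r)$ directly from the two conservation laws produces the same function as the paper's $\Phi_{\mathfrak{E},L}$ (up to the factor $c^2$), and the single computation $g'(r)=L^2c^2(\alpha-2)r^{\alpha-3}+\kappa^2 r^{-\alpha-1}>0$ handles all $\alpha\geq 2$ (and all values of $L$, including $L=0$) at once, which is a genuine simplification. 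One shared imprecision, inherited from the statement itself: for $\alpha\in(1,2)$ the minimum $\eta(\alpha)$ of $\mathfrak{E}(\gamma)$ is attained at the interior point $\gamma^{*}$, so a circular orbit of energy exactly $\eta(\alpha)$ does exist and the characterization there should read $\mathfrak{E}\geq\eta(\alpha)$; the paper's own proof has the identical issue, so this is not a defect of your argument relative to it.
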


\begin{remark}\label{rem-finale}  
\rm Proposition \ref{prop-esist} immediately implies that the condition $\mathfrak{E}>mc^2$ assumed in Section \ref{sec3} is optimal when $\alpha \geq 2$. Let us also observe that, in this case, the arguments used along the proof allow us to conclude that for every $\mathfrak{E} > mc^2$ there exists a unique (up to time-translation and time-inversion) periodic solution of energy $\mathfrak{E}$, which, thus, is necessarily the solution provided by Theorem \ref{thmain} (recall also Remark \ref{remhomotopy}). As a consequence, we can infer that circular solutions are local minimizers for the Maupertuis functional whenever $\alpha \geq 2$.
Notice that for $\alpha \in (1,2)$, instead, we cannot obtain the same conclusion since non-circular solutions could exist.

Moreover, again for $\alpha \in (1,2)$, we do not know if the existence of periodic solutions when $2mc^2 \sqrt{1-\alpha}/\alpha<\mathfrak{E} \leq mc^2$ can be ensured also for the more general problem \eqref{eq-ncariche}.
\end{remark}

\begin{proof}[Proof of Proposition \ref{prop-esist}] 
	
\noindent
1.	The study of the existence of circular solutions of \eqref{eq-4.1} can be carried out by passing to polar coordinates, writing 
\[
x(t)=re^{i\omega(t)},\quad \forall \ t\in \R,
\]
where $r>0$. It is possible to see that $\dot{\omega}$ is constant. Indeed, for solutions $x$ of \eqref{eq-4.1} the relativistic angular momentum
\[
x\wedge \frac{m\dot{x}}{\sqrt{1-|\dot{x}|^2/c^2}}
\] 
(see also \eqref{eq-momenti} and the discussion at the beginning of the proof of the second statement) is conserved. Hence  
\[
\frac{mr^2 \dot{\omega}}{\sqrt{1-|\dot{\omega}|^2 r^2/c^2}}
\]
is constant, thus implying that $\dot{\omega}$ is constant. We can then assume that
	\[
	x(t)=re^{i\omega t},\quad \forall \ t\in \R,
	\]
	for some $\omega \in \R$. We immediately deduce that
	\[
\frac{\rm d}{{\rm d}t}\left(\frac{m\dot{x}}{\sqrt{1-|\dot{x}|^2/c^2}}\right)=-\frac{cm\omega^2r}{\sqrt{c^2-\omega^2r^2}} \, e^{i\omega t},
\]
for every $t\in \R$. Hence, $x$ is a solution of \eqref{eq-4.1} if and only if
	\begin{equation}
	\label{relation}
	-\frac{cm\omega^2r}{\sqrt{c^2-\omega^2r^2}}\, e^{i\omega t}=-\frac{\kappa}{r^{\alpha+1}}\, e^{i\omega t}, 
	\end{equation}
i.e. if and only if $r$ and $\omega$ are related by 
	\[
	\omega^2=\frac{-\kappa^2+\kappa\sqrt{\kappa^2+4c^4m^2r^{2\alpha}}}{2m^2c^2r^{2\alpha+2}}. 
	\]
From this condition we deduce that the energy given in \eqref{en} can be rewritten as
	\begin{equation} \label{eq-energiaridotta}
	\begin{split} 
E(x,\dot{x})=\dfrac{c^2\kappa}{r^{\alpha+2}\omega^2}-\dfrac{\kappa}{\alpha r^{\alpha}}=
	\kappa\left(\dfrac{2m^2c^4r^{\alpha}}{-\kappa^2+\kappa\sqrt{\kappa^2+4m^2c^4r^{2\alpha}}}-\dfrac{1}{\alpha r^{\alpha}}\right)=\kappa \, f(r^\alpha),
	\end{split}
	\end{equation}
where 
\[
f(t)=\frac{2m^2c^4t}{-\kappa^2+\kappa\sqrt{\kappa^2+4m^2c^4t^{2}}}-\frac{1}{\alpha t},\quad \forall \ t>0.
\]
We now study the admissible values of this energy, depending on the radius $r>0$. It is immediate so see that
\begin{equation} \label{eq-liminf}
	\lim_{t\rightarrow+\infty}f(t)=\frac{mc^2}{\kappa}.
	\end{equation}
	On the other hand, from the Taylor expansion
	\[
	\sqrt{\kappa^2+4m^2c^4t^2}=\kappa\left(1+2\frac{m^2c^4}{\kappa^2}t^2-2\frac{m^4c^8}{\kappa^4}t^4\right)+o(t^4),\quad t\to 0,
	\]
we deduce
	\[
	f(t)=\dfrac{2m^2c^4t}{2m^2c^4 t^2-2\frac{m^4c^8}{\kappa^2}t^4+o(t^4)}-\frac{1}{\alpha t}=\left(1-\frac{1}{\alpha}\right)\frac{1}{t}+o\left(\frac{1}{t}\right),\quad t\to 0. 
	\]
	Therefore, since $\alpha>1$, we obtain
	\begin{equation} \label{eq-limzero}
	\lim_{t\rightarrow 0^+} f(t)=+\infty. 
	\end{equation}
As far as the monotonicity of $f$ is concerned, we have
	\begin{equation} \label{eq-pr31}
	f'(t)=\dfrac{2m^2c^4\left(-\kappa^2+k\sqrt{\kappa^2+4m^2c^4t^2}\right)-8m^4c^8\kappa t^2\sqrt{\kappa^2+4m^2c^4t^2}}{\left(-\kappa^2+\kappa\sqrt{\kappa^2+4m^2c^4t^2}\right)^2}+\dfrac{1}{\alpha t^2},
	\end{equation}
	for every $t>0$. Now, let us define
	\[
	g(t)=\sqrt{\kappa^2+4m^2c^4t^2},
	\]
	for every $t>0$. From \eqref{eq-pr31} we infer that
	\begin{align*}
	f'(t)&=\dfrac{2m^2c^4[(-\kappa^2+\kappa g(t))g(t)-4m^2c^4\kappa t^2]}{g(t)(-\kappa^2+g(t))^2}+\dfrac{1}{\alpha t^2}\\&=-\dfrac{2m^2c^4 \kappa^2}{g(t)(g(t)-\kappa)}+\dfrac{1}{\alpha t^2}=\dfrac{-2\alpha t^2m^2c^4\kappa^2+g(t)(g(t)-\kappa)}{\alpha t^2 g(t)(g(t)-\kappa)},
	\end{align*}
	for every $t>0$. A simple computation shows that 
	\begin{equation} \label{eq-pr41}
f'(t)\geq 0 \quad \Longleftrightarrow \quad	\kappa g(t)\leq \kappa^2+2(2-\alpha)m^2c^4t^2 \quad \Longleftrightarrow \quad	\kappa g(t)- \kappa^2 \leq 2(2-\alpha)m^2c^4t^2.
	\end{equation}
	
Now, if $\alpha \geq 2$, observing that $\kappa g(t)- \kappa^2>0$, for every $t>0$, we deduce that $f'(t)< 0$, for every $t>0$; taking into account \eqref{eq-energiaridotta}, \eqref{eq-liminf} and \eqref{eq-limzero}, we can conclude that in this case the range of the energy is $(mc^2,+\infty)$. As a consequence, for every $\mathfrak{E} \in (mc^2,+\infty)$ there exists $r_\mathfrak{E} >0$ such that 
\[
E(x,\dot{x})=\mathfrak{E},
\]
where $x(t)=r_\mathfrak{E}  e^{i\omega_\mathfrak{E}  t}$, with $\omega_\mathfrak{E} $ as in \eqref{relation}. This proves the validity of \eqref{eq-suffcirc} for $\alpha\geq 2$.	

On the other hand, if $\alpha \in (1,2)$, from \eqref{eq-pr41} we deduce that
	\[
	\begin{array}{ll}
	f'(t)\geq 0 \quad & \displaystyle \Longleftrightarrow \kappa ^2g^2(t)\leq (\kappa^2+2(2-\alpha)m^2c^4t^2)^2\\
	&\\
	&\displaystyle \Longleftrightarrow (2-\alpha)^2m^2c^4t^2+(1-\alpha)\kappa ^2\geq 0. 
	\end{array}
	\]
Hence, we have
	\[
	\begin{array}{ll}
	f'(t)\geq 0 \displaystyle \Longleftrightarrow t\geq\frac{\kappa}{mc^2}\frac{\sqrt{\alpha-1}}{(2-\alpha)}=:t_{\min}. 
	\end{array}
	\]
	As a consequence, $f$ has the unique mininum point $t_{\min}$ with corresponding minimum
	\[
	f(t_{\min})=\dfrac{2m^2c^4t_{\min}}{-\kappa^2+\kappa g(t_{\min})}-\dfrac{1}{\alpha t_{\min}}=\dfrac{1}{t_{\min}}\left(\dfrac{1}{2-\alpha}-\frac{1}{\alpha}\right)=\dfrac{2mc^2}{\kappa}\, \dfrac{\sqrt{\alpha-1}}{\alpha}
	\]
	Recalling \eqref{eq-energiaridotta}, we conclude that the minimum value of the energy is
	\[
2mc^2\, \dfrac{\sqrt{\alpha-1}}{\alpha}.
	\]
	Arguing as above, taking into account \eqref{eq-limzero}, we conclude that there exists a circular solution with energy $\mathfrak{E} $ if and only if $\mathfrak{E}  >2mc^2 \sqrt{1-\alpha}/\alpha$. 

\smallskip
\noindent
2. In order to study non-circular periodic solutions, we follow the approach of \cite[Section 2.2]{BDF21} (which deals with the Kepler/Coulomb case $\alpha=1$). Let us first observe that, according to the discussion of Section 2, equation \eqref{eq-4.1} has a Hamiltonian structure. The Hamiltonian given in \eqref{hamilt} in this case is  
\[
H(x,p)=mc^2\sqrt{1+\frac{|p|^2}{m^2c^2}}-\dfrac{\kappa}{\alpha |x|^\alpha},
\]
where $p$ is defined in \eqref{dual}. We introduce the angular momentum 
\begin{equation} \label{eq-momenti}
\mathcal{L}=\langle x,Jp\rangle,\quad\hbox{where}\;\, J = 
\begin{pmatrix}
0 &\; 1 \\
-1 &\; 0 \\
\end{pmatrix}.
\end{equation}
Now, let us define
\begin{equation*}
\Upsilon=(0,+\infty)\times \mathbb{T}^1 \times \mathbb{R}^2
\end{equation*}
and the symplectic diffeomorphism
\[
\Psi \colon \Upsilon \to (\mathbb{R}^2\setminus\{0\})\times \mathbb{R}^2, \qquad (r,\vartheta,l,\Xi)\mapsto (x,p), 
\]
given by 
\[
x=re^{i\vartheta}, \qquad p=le^{i\vartheta}+\dfrac{\Xi}{r}ie^{i\vartheta}.
\]
In the new variables the Hamiltonian and the angular momentum, which are first integrals, can be written as 
\[
\mathcal{L}_0(r,\vartheta,l,\Xi)=\Xi
\]
and
\[
H_0(r,\vartheta,l,\Xi)=mc^2\sqrt{1+\dfrac{l^2+\Xi^2/r^2}{m^2c^2}}-\dfrac{\alpha}{r}.
\]
Hence, every solution $(r,l,\vartheta,\Xi)=(r(t),l(t),\vartheta(t),\Xi(t))$ of the Hamiltonian system satisfies
\begin{equation*}
mc^2\sqrt{1+\dfrac{l(t)^2+L^2/(r(t))^2}{m^2c^2}}-\dfrac{\kappa}{\alpha r(t)}\equiv \mathfrak{E},
\end{equation*}
for some $\mathfrak{E}, L \in \mathbb{R}$. This relation can be written as 
\begin{equation} \label{eq-pr35}
\ell^2=\frac{1}{c^2}\left(\frac{\kappa^2}{\alpha^2r^{2\alpha}}-\frac{c^2L^2}{r^2}+2\mathfrak{E}\frac{\kappa}{\alpha r^{\alpha}}+\mathfrak{E}^2-m^2c^4\right).
\end{equation}
Let us now fix $L>0$ and a value $\mathfrak{E}\in \R$ of the energy and define
\[
\Phi_{\mathfrak{E},L}(r)=\frac{1}{c^2}\left(\frac{\kappa^2}{\alpha^2r^{2\alpha}}-\frac{c^2L^2}{r^2}+2\mathfrak{E} \frac{\kappa}{\alpha r^{\alpha}}+\mathfrak{E}^2-m^2c^4\right),
\]
for every $r>0$. According to \eqref{eq-pr35}, non-constant trajectories in the phase-plane $(r,\ell)$ are confined in the region $\{ (r,\ell):\ \Phi_{\mathfrak{E},L}(r)\geq 0\}$. Moreover, closed orbits exist if and only if $\Phi_{\mathfrak{E},L}$ has two consecutive zeros and it is striclty positive between them. This cannot occur when $\alpha \geq 2$, as it is clear by studying the behaviour of the function $\Phi_{\mathfrak{E},L}$.

Indeed, assume first that $\alpha = 2$. In this case the function  $\Phi_{\mathfrak{E},L}$ can be written has
\[
\Phi_{\mathfrak{E},L}(r)=\frac{1}{c^2 r^4}\, P_2(r^2), \quad r>0,
\]
where $P_2$ is the second order polynomial defined by
\[
P_2(x)=\frac{\kappa^2}{\alpha^2}-\left(c^2L^2+2\mathfrak{E} \frac{\kappa}{\alpha}\right)\, x+(\mathfrak{E}^2-m^2c^4)\, x^2,
\]
for every $x\geq 0$. Since $P_2(0)>0$, the polynomial $P_2$ (and then $\Phi_{\mathfrak{E},L}$) cannot have two consecutive positive zeros, being positive between them. Hence, no bounded non-costant orbits exist.

Now, let us consider the case $\alpha >2$. We first observe that
\begin{equation}
\label{phi}
\lim_{r\rightarrow 0^+}\Phi_{\mathfrak{E},L}(r)=+\infty, \quad \lim_{r\rightarrow +\infty}\Phi_{\mathfrak{E},L}(r)=\dfrac{\mathfrak{E}^2-m^2c^4}{c^2}.
\end{equation}
and
\begin{equation} \label{eq-derphi}
\Phi_{\mathfrak{E},L}'(r)=\dfrac{2}{c^2r^{2\alpha+1}}\left(c^2L^2r^{2\alpha-2}-\dfrac{\kappa^2}{\alpha}-2\mathfrak{E} \kappa r^\alpha\right)=\dfrac{2}{c^2r^{2\alpha+1}}\, \widetilde{\Phi}_{\mathfrak{E},L}(r),
\end{equation}
where $\widetilde{\Phi}_{\mathfrak{E},L}:(0,+\infty)\to \R$ is defined by
\[
\widetilde{\Phi}_{\mathfrak{E},L}(r)=c^2L^2r^{2\alpha-2}-\dfrac{\kappa^2}{\alpha}-2\mathfrak{E} \kappa r^\alpha,
\]
for every $r>0$. It is immediate to see that
\begin{equation}
\label{tilde1}
\lim_{r\rightarrow 0^+}\widetilde{\Phi}_{\mathfrak{E},L}(r)=-\dfrac{\kappa^2}{\alpha}\quad\hbox{and}\quad \lim_{r\rightarrow +\infty}\widetilde{\Phi}_{\mathfrak{E},L}(r)=+\infty.
\end{equation}
Moreover, we have
\[
\widetilde{\Phi}'_{\mathfrak{E},L}(r)=2r^{2\alpha-3}\, \left( (\alpha-1)c^2L^2-\alpha \mathfrak{E}\kappa r^{2-\alpha}\right),
\]
for every $r>0$. As a consequence, we obtain
\begin{equation}
\label{tilde3}
\begin{array}{ll}
\widetilde{\Phi}'_{\mathfrak{E},L}(r) > 0,\quad \forall \ r> 0 & \; \text{ if } \mathfrak{E}\leq 0,
\\
\widetilde{\Phi}'_{\mathfrak{E},L}(r)> 0\quad \Longleftrightarrow \quad 
\displaystyle r > \left(\dfrac{\alpha-1}{\alpha}\dfrac{c^2L^2}{\mathfrak{E}\kappa}\right)^  {1/(2-\alpha)} & \; \text{ if } \mathfrak{E} > 0.
\end{array}
\end{equation}
Therefore, from \eqref{tilde1} and \eqref{tilde3} we deduce that, in both cases, $\widetilde{\Phi}_{\mathfrak{E},L}$ has exactly one zero. Taking into account \eqref{phi} and \eqref{eq-derphi}, we conclude that ${\Phi}_{\mathfrak{E},L}$ has exactly one critical point, which is a global minimum point. This proves that ${\Phi}_{\mathfrak{E},L}$ cannot have two consecutive zeros being strictly positive between them. Hence, no bounded non-costant orbits exist.
\end{proof}

\begin{remark}\label{rem_alfa} 
\rm
We point out that in the Keplerian/Coulombian case $\alpha=1$ periodic solutions of energy $\mathfrak{E}$ exist if and only if 
$0<\mathfrak{E}<mc^2$. Indeed, in \cite{BDF21} it is proved that this condition is necessary and sufficient for the existence of non-circular periodic solutions. On the other hand, using the notation in the proof of the first statement of Proposition \ref{prop-esist}, 
it can be checked that the function $f$ is increasing, with $\lim_{t \to 0^+} f(t) = 0$ and $\lim_{t \to +\infty} f(t) = mc^2/\kappa$: therefore, the existence of circular periodic solutions is guaranteed if and only if $0<\mathfrak{E}<mc^2$. The value $\alpha =1$ is then in some sense a threshold for the range of admissible energies. 
\end{remark}

\begin{remark}\label{rem-nonrel} 
\rm
We conclude this section with the asymptotic study for $c\to +\infty$ of the circular solutions of energy $\mathfrak{E}>mc^2$ (i.e. $h>0$) whose existence has been proved in Proposition \ref{prop-esist}. We refer to Remark \ref{rem-nonrel1} for the motivation and the general discussion.
In the case of the model problem studied in this section, the limit problem when $c\to +\infty$ is 
\[
m \ddot x=-\frac{\kappa x}{|x|^{\a+2}}.
\] 
It is well known that circular solutions of energy $h>0$ of this problem exist if and only if $\alpha>2$ and that in this case their radius $R_h$ is given by
\begin{equation} \label{eq-raggiocircclass}
R^\alpha_h =\dfrac{\kappa}{h}\, \dfrac{\alpha -2}{2\alpha}.
\end{equation}
On the other hand, we recall from the proof of Proposition \ref{prop-esist} that the radius $r=r_h(c)$ of these circular solutions satisfies the relation
\begin{equation} \label{eq-nuovaeq}
\varphi(r,c)=h+mc^2,
\end{equation}
where $\varphi:(0,+\infty)\times (0,+\infty)\to \R$ is defined by
\begin{equation} \label{eq-defvarphi}
\varphi (r,c)=\kappa\left(\dfrac{2m^2c^4r^{\alpha}}{-\kappa^2+\kappa\sqrt{\kappa^2+4m^2c^4r^{2\alpha}}}-\dfrac{1}{\alpha r^{\alpha}}\right),\quad \forall \ (r,c)\in (0,+\infty)\times (0,+\infty)
\end{equation}
(cf. \eqref{eq-energiaridotta}). More precisely, the discussion in the proof of Proposition \ref{prop-esist} shows that for every $h>0$ equation \eqref{eq-nuovaeq} can be uniquely solved for $r$ as a  function of $c$, and
\begin{equation}\label{eq-seg1}
\partial_r \varphi(r,c)<0,\quad \text{ for every $(r,c)$ such that } \varphi(r,c) = h + mc^2.
\end{equation}
Let us notice, from a different point of view, that the function $r_h$ can be seen as implicitely defined by equation \eqref{eq-nuovaeq}. The implicit function theorem implies then that $r_h$ is differentiable. A simple computation proves that
\begin{align*}
\partial_c \varphi(r,c) & = \displaystyle -\dfrac{4mc}{\sqrt{\kappa^2+4m^2c^4 r^{2\alpha}}}\, \dfrac{1}{(-\kappa+\sqrt{\kappa^2+4m^2c^4 r^{2\alpha}})^2}\cdot \\
&\quad \displaystyle \cdot \Big(\sqrt{\kappa^2+4m^2c^4 r^{2\alpha}}\left(2m^2c^4 r^{2\alpha}+2\kappa mc^2 r^{\alpha}+\kappa^2 \right)\\
&\quad \displaystyle  -4m^3 c^6 r^{3\alpha} -4m^2 c^4 \kappa r^{2\alpha} -2mc^2\kappa^2 r^{\alpha}-\kappa^3\Big) ,
\end{align*}
for every $(r,c)\in (0,+\infty)\times (0,+\infty)$.	It is then easy to see that
\begin{equation}\label{eq-seg2}
\partial_c \varphi(r,c)<0,\quad \forall \ (r,c)\in (0,+\infty)\times (0,+\infty).
\end{equation}
Hence, differentiating \eqref{eq-nuovaeq} with respect to $c$ and taking into account \eqref{eq-seg1} and \eqref{eq-seg2}, we deduce that
\[
r'_h(c)=\dfrac{2mc - \partial_c \varphi(r_h(c),c)}{\partial_r \varphi(r_h(c),c)}<0,\quad \forall \ c>0.
\]
Ad a consequence, $r_h$ decreases and there exists
\[
\lim_{c\to +\infty} r_h(c)=r_h(\infty)\geq 0.
\]
We now show that $r_h(\infty)=R_h$, with $R_h$ as in \eqref{eq-raggiocircclass}, when $\alpha >2$ and that $r_h(\infty)=0$ when $\alpha \in (1,2]$, thus proving that for $h>0$ there is convergence to the non-relativistic circular solutions when $\alpha >2$, while relativistic circular solutions disappear in the non-relativistic limit when $\alpha \in (1,2]$. To this end, let us observe that, using \eqref{eq-defvarphi}, equation \eqref{eq-nuovaeq} can be written as
\[
\dfrac{2m^2c^4r^{2\alpha}}{-\kappa+\sqrt{\kappa^2+4m^2c^4r^{2\alpha}}}-mc^2 r^{\alpha}=hr^{\alpha} \, +\dfrac{\kappa}{\alpha},
\]
i.e. 
\begin{equation}\label{eq-ultimaeq}
\psi (mc^2 r^{\alpha})=\Theta(r^\alpha),
\end{equation}
where $\psi, \Theta:(0,+\infty)\to \R$ are defined by
\[
\psi(x)=\dfrac{2x^2}{-\kappa+\sqrt{\kappa^2+4x^2}}-x,\quad \Theta(x)=hx+\dfrac{\kappa}{\alpha},
\]
for every $x>0$, respectively. It is easy to prove that $\psi$ is a decreasing function with range $(\kappa/2,\kappa)$. 

Now, from \eqref{eq-ultimaeq} we deduce that 
\begin{equation} \label{eq-pr88}
\psi (mc^2 r^{\alpha}_h(c))=\Theta(r^\alpha_h(c)),
\end{equation}
for every $c>1$. Let us notice that if $r_h(\infty)>0$ then $mc^2 r_h(c)\to +\infty$ for $c\to +\infty$; as a consequence, passing to the limit in \eqref{eq-pr88} and recalling the definition of $\Theta$, we obtain
\[
\dfrac{\kappa}{2}=h r^\alpha_h(\infty) +\dfrac{\kappa}{\alpha},
\]
i.e. 
\begin{equation} \label{eq-pr89}
\kappa \left(\dfrac{1}{2}-\dfrac{1}{\alpha}\right)=h r^\alpha_h(\infty),
\end{equation}
which is impossible if $h>0$ and $\alpha \in (1,2]$. This shows that in the case $\alpha \in (1,2]$ the limiting radius is $0$. On the other hand, if $\alpha >2$ and $r_h(\infty)=0$, then
\[
\lim_{c\to +\infty} \Theta (r_h^\alpha (c))=\dfrac{\kappa}{\alpha}<\dfrac{\kappa}{2}.
\]
Hence, there exists $c_0>0$ such that
\[
\Theta (r_h^\alpha (c))<\dfrac{\kappa}{2},\quad \forall \ c>c_0.
\]
Thus, for every $c>c_0$, the value $\Theta (r_h^\alpha (c))$ does not belong to the range of $\psi$, contradicting \eqref{eq-pr88}. We then conclude that if $\alpha >2$ necessarily $r_h(\infty)>0$ and, as a consequence, \eqref{eq-pr89} holds true. This implies that $r_h(\infty)=R_h$ and the circular relativistic solutions converge to the circular classical solution of energy $h>0$.
\end{remark}

\bigskip
\noindent
\textbf{Data availability statement.} Data sharing not applicable to this article as no datasets were generated or analysed during the current study.

\bibliographystyle{plain}
\bibliography{Bo-Da-MH}

\begin{thebibliography}{10}

\bibitem{ACZ93}
A.~Ambrosetti and V.~Coti~Zelati.
\newblock {\em Periodic solutions of singular {L}agrangian systems}, volume~10
  of {\em Progress in Nonlinear Differential Equations and their Applications}.
\newblock Birkh\"{a}user Boston, Inc., Boston, MA, 1993.

\bibitem{AB71}
C.M. Andersen and H.C. von Baeyer.
\newblock On classical scalar field theories and the relativistic {K}epler
  problem.
\newblock {\em Ann. Physics}, 62:120--134, 1971.

\bibitem{ABT20}
D.~Arcoya, C.~Bereanu, and P.J. Torres.
\newblock Critical point theory for the {Lorentz} force equation.
\newblock {\em Arch. Ration. Mech. Anal.}, 232(3):1685--1724, 2019.

\bibitem{Arn89}
V.I. Arnol'd.
\newblock {\em Mathematical methods of classical mechanics}, volume~60 of {\em
  Graduate Texts in Mathematics}.
\newblock Springer-Verlag, New York, second edition, 1989.

\bibitem{Ben83}
V.~Benci.
\newblock Closed geodesics for the {J}acobi metric and periodic solutions of
  prescribed energy of natural {H}amiltonian systems.
\newblock {\em Ann. Inst. H. Poincar\'{e} Anal. Non Lin\'{e}aire},
  1(5):401--412, 1984.

\bibitem{BDF21}
A.~Boscaggin, W.~Dambrosio, and G.~Feltrin.
\newblock Periodic solutions to a perturbed relativistic {K}epler problem.
\newblock {\em SIAM J. Math. Anal.}, 53(5):5813--5834, 2021.

\bibitem{BDP-pp}
A.~Boscaggin, W.~Dambrosio, and D.~Papini.
\newblock Periodic solutions to relativistic {K}epler problems: a variational
  approach.
\newblock Preprint, arXiv:2202.05604, 2022.

\bibitem{BDT17}
A.~Boscaggin, W.~Dambrosio, and S.~Terracini.
\newblock Scattering parabolic solutions for the spatial {$N$}-centre problem.
\newblock {\em Arch. Ration. Mech. Anal.}, 223(3):1269--1306, 2017.

\bibitem{Boy04}
T.H. Boyer.
\newblock Unfamiliar trajectories for a relativistic particle in a {K}epler or
  {C}oulomb potential.
\newblock {\em Amer. J. Phys.}, 72:992--997, 2004.

\bibitem{Cas17}
R.~Castelli.
\newblock Topologically distinct collision-free periodic solutions for the
  {$N$}-center problem.
\newblock {\em Arch. Ration. Mech. Anal.}, 223(2):941--975, 2017.

\bibitem{FT00}
P.~Felmer and K.~Tanaka.
\newblock Scattering solutions for planar singular {H}amiltonian systems via
  minimization.
\newblock {\em Adv. Differential Equations}, 5(10-12):1519--1544, 2000.

\bibitem{GPS02}
H.~Goldstein, C.~Poole, and J.~Safko.
\newblock {\em Classical mechanics}.
\newblock Addison-Wesley Series in Physics. Addison-Wesley Publishing Co.,
  Reading, Mass., third edition, 2002.

\bibitem{Gor75}
W.B. Gordon.
\newblock Conservative dynamical systems involving strong forces.
\newblock {\em Trans. Amer. Math. Soc.}, 204:113--135, 1975.

\bibitem{Gre04}
W.~Greiner.
\newblock {\em {C}lassical mechanics. {P}oint particles and relativity}.
\newblock Classical Theoretical Physics. Springer-Verlag, New York, 2004.

\bibitem{Hat02}
A.~Hatcher.
\newblock {\em Algebraic topology}.
\newblock Cambridge University Press, Cambridge, 2002.

\bibitem{Maw13}
J.~Mawhin.
\newblock Resonance problems for some non-autonomous ordinary differential
  equations.
\newblock In {\em Stability and bifurcation theory for non-autonomous
  differential equations}, volume 2065 of {\em Lecture Notes in Math.}, pages
  103--184. Springer, Heidelberg, 2013.

\bibitem{MW89}
J.~Mawhin and M.~Willem.
\newblock {\em Critical point theory and {H}amiltonian systems}, volume~74 of
  {\em Applied Mathematical Sciences}.
\newblock Springer-Verlag, New York, 1989.

\bibitem{MP06}
G.~Mu\~{n}oz and I.~Pavic.
\newblock A {H}amilton-like vector for the special-relativistic {C}oulomb
  problem.
\newblock {\em European J. Phys.}, 27:1007--1018, 2006.

\bibitem{Pis93}
L.~Pisani.
\newblock Periodic solutions with prescribed energy for singular conservative
  systems involving strong forces.
\newblock {\em Nonlinear Anal.}, 21(3):167--179, 1993.

\end{thebibliography}

\end{document}